\RequirePackage{plautopatch}
\documentclass[12pt]{amsart}
\usepackage{amsmath,amsthm,amssymb,mathtools,latexsym,amsfonts,longtable,bm,thmtools,comment,etoolbox}
\usepackage[margin=20mm]{geometry}
\usepackage[dvipsnames]{xcolor}
\usepackage[
  final,
  colorlinks=true,
  linkcolor=MidnightBlue,
  citecolor=ForestGreen,
  urlcolor=BrickRed
]{hyperref}
\usepackage[nameinlink,noabbrev]{cleveref}
\hypersetup{
  pdftitle={The minimal obstruction modulus for quadratic forms},
  pdfauthor={Naoki Ochi}
}

\theoremstyle{definition}
\newtheorem{dfn}{Definition}[section]
\newtheorem{prop}[dfn]{Proposition}
\newtheorem{lem}[dfn]{Lemma}
\newtheorem{thm}[dfn]{Theorem}
\newtheorem{col}[dfn]{Corollary}
\newtheorem{rem}[dfn]{Remark}

\newtheorem{ex}[dfn]{Example}

\newtheorem*{clm}{Claim}

\crefname{dfn}{Definition}{Definitions}
\crefname{prop}{Proposition}{Propositions}
\crefname{lem}{Lemma}{Lemmas}
\crefname{thm}{Theorem}{Theorems}
\crefname{col}{Corollary}{Corollaries}
\crefname{rem}{Remark}{Remarks}
\crefname{conj}{Conjecture}{Conjectures}
\crefname{fact}{Fact}{Facts}
\crefname{clm}{Claim}{Claims}
\crefname{ex}{Example}{Examples}
\crefname{ques}{Question}{Questions}

\newcommand{\R}{\mathbb{R}}

\newcommand{\Z}{\mathbb{Z}}

\newcommand{\legendre}[2]{\left(\frac{#1}{#2}\right)}

\title[The minimal obstruction modulus for quadratic forms]{The minimal obstruction modulus \\for quadratic forms}
\author{Naoki Ochi}
\address{Graduate School of Mathematics, Nagoya University, Nagoya, Japan.}
\email{ochi.naoki.d1@s.mail.nagoya-u.ac.jp or o.naoki.math@outlook.jp}
\date{\today}
\subjclass[2020]{11E16}
\keywords{Binary quadratic forms, ternary diagonal quadratic forms, local obstructions, congruence representations, minimal obstruction modulus}

\begin{document}

\begin{abstract}
Let $Q = ax^2+bxy+cy^2$ be a primitive positive definite integral binary quadratic form with
discriminant $\Delta = b^2-4ac$.
It is known that $Q$ admits a \emph{local obstruction}; that is,
there exist $k,l \in \Z$ such that $Q \not\equiv l \pmod k$.
We study the \emph{minimal obstruction modulus}
$\kappa_Q \coloneq \min \{k \in \Z_{\geq 1} \mid \text{there exists } l \text{ such that } Q \not\equiv l \pmod k \}$,
and we determine $\kappa_Q$ completely, treating the cases $\Delta \equiv 0 \pmod 4$ and $\Delta \equiv 1 \pmod 4$ separately.
We also determine the analogous invariants for primitive ternary diagonal forms.
\end{abstract}

\maketitle

\section{Introduction}
The study of integers represented by binary quadratic forms is a classical topic in number theory, dating back to Fermat's observation that an odd prime $p$ satisfies $p = x^2 + y^2$ if and only if $p \equiv 1 \pmod{4}$. A systematic theory was established by Gauss in his \emph{Disquisitiones Arithmeticae}~\cite{gauss1801}, where he introduced equivalence, reduction, and composition of forms, and related representation problems to the arithmetic of class groups.
 
Subsequent work by Dirichlet, Dedekind, and others clarified the connection between binary quadratic forms and ideal classes in quadratic number fields. In this setting, determining which integers are represented by a given form is closely tied to the structure of the corresponding class group and its decomposition into genera. For a thorough account of this history, we refer to~\cite{coxprime}.
 
A key feature of the theory is the role of local conditions. If an integer is represented by a binary quadratic form over the integers, then it must be represented over all completions, in particular over $\Z_p$ for every prime $p$ and over $\R$. This viewpoint fits naturally into the framework of the Hasse--Minkowski principle, which asserts that a quadratic form represents a number over a number field if and only if it does so locally everywhere; see~\cite{jones1950} for a detailed treatment. Together with the local theory of quadratic forms, this implies that every positive definite integral quadratic form in at most three variables fails to represent some $p$-adic integer at a finite prime $p$. Equivalently, there exist integers $k$ and $l$ such that $Q(x_1, \dots, x_n) \not\equiv l \pmod{k}$ for all integer inputs. This phenomenon is known as a \emph{local obstruction}.

A natural question is when local obstructions arise and at which moduli. The subject has been studied by many methods; a recent contribution is the work of Liu and Ouyang~\cite{binary-modn}. However, the \emph{minimal} obstruction modulus of a given form has received little attention. We determine it completely for primitive binary quadratic forms and diagonal ternary quadratic forms.

\newpage
We first define $\kappa_Q$ and $\epsilon_{Q,p}$ for general quadratic forms.

\begin{dfn}
Let $Q$ be an integral positive definite quadratic form in $r$ variables.
\begin{enumerate}
\item For $l \in \Z$ and $k \in \Z_{\geq 1}$, we say that $Q \not\equiv l \pmod k$ if for all $x \in \Z^r$, we have $Q(x) \not\equiv l \pmod k$.
If $Q \not\equiv l \pmod k$, we say that \emph{$Q$ has a local obstruction modulo $k$}.
\item We denote the smallest modulus for which $Q$ has a local obstruction by
\begin{align*}
   \kappa_Q \coloneq \min \{k \in \Z_{\geq 1} \mid Q \text{ has a local obstruction modulo } k \}.
\end{align*} 
If $Q$ has no local obstruction, we set $\kappa_Q=\infty$.
\item For a prime $p$, we denote the smallest exponent for which $Q$ has a local obstruction by
\begin{align*}
   \epsilon_{Q,p} \coloneq \min \{e \in \Z_{\geq 1} \mid Q \text{ has a local obstruction modulo } {p^e} \},
\end{align*} 
where we set $ \epsilon_{Q,p} =\infty$ if $Q$ has no local obstruction modulo $p^e$ for any $e$.

\end{enumerate}
\end{dfn}
For example, if $\epsilon_{Q,2}=4$, $\epsilon_{Q,3}=2$, $\epsilon_{Q,5}=1$ and $\epsilon_{Q,p}= \infty$ for every prime $p \geq 7$,
then $\kappa_Q = \min \{ 2^4, 3^2, 5^1 \} = 5$.
By the Chinese remainder theorem, $Q$ represents every residue class modulo
$p_1^{e_1}\cdots p_s^{e_s}$ if and only if it represents every residue class modulo each
$p_i^{e_i}$. Thus, $Q$ has an obstruction modulo the product if and only if it has an obstruction modulo at least one prime-power factor. In particular,
\[
  \kappa_Q=\min_{\substack{p\text{ prime}\\ \epsilon_{Q,p}<\infty}}p^{\epsilon_{Q,p}}.
\]
As above, the minimum is understood to be $\infty$ when the indexing set is empty.

The main result of this note is an explicit formula for $ \kappa_Q $ and $\epsilon_{Q,p}$ for primitive positive definite integral binary quadratic forms.

\begin{thm}[Main Theorem]\label{thm:local-kappa}
Let $Q(x,y)=ax^2+bxy+cy^2$ be a primitive positive definite integral binary quadratic form with
discriminant $\Delta = b^2 - 4ac$.

\smallskip
\noindent\emph{For an odd prime $p$,} writing $\legendre{\Delta}{p}$ for the Legendre symbol,
\[
   \epsilon_{Q,p} =
   \begin{cases}
      1      & \text{if } \legendre{\Delta}{p} = 0 \quad (\text{i.e. } p \mid \Delta), \\[2pt]
      2      & \text{if } \legendre{\Delta}{p} = -1, \\[2pt]
      \infty & \text{if } \legendre{\Delta}{p} = 1.
   \end{cases}
\]

\noindent\emph{For $p = 2$:}
\[
   \epsilon_{Q,2} =
   \begin{cases}
      \infty & \text{if } \Delta \equiv 1 \pmod{8}, \\[2pt]
      2      & \text{if } \Delta \equiv 5 \pmod{8}, \\[2pt]
      3      & \text{if } 4 \mid \Delta \text{ and } \Delta = -2^{3}m \ (m \text{ positive and odd}), \\[2pt]
      2      & \begin{gathered}
                 \text{if }4 \mid \Delta,\ \Delta = -2^{n}m,\text{ and }m\text{ is positive and odd},\\
                 n\geq2\text{ and }n \neq 3
               \end{gathered}.
   \end{cases}
\]
\end{thm}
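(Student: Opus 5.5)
The plan is to reduce everything to diagonalized (or split) forms modulo $p^e$ by an invertible change of variables. First, $Q$ and any $\mathrm{GL}_2(\Z)$-equivalent form represent exactly the same residue classes modulo every $k$. Second, multiplying $Q$ by a unit modulo $p^e$ permutes the residue classes, so it does not change whether some class is missed. Since $Q$ is primitive, it represents an integer prime to any given prime $p$. After an equivalence we may therefore assume $p\nmid a$. The case $p=2$ with $\Delta\equiv 1 \pmod 8$ is handled separately.

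\emph{Odd $p$.} With $p\nmid 2a$ we use
\[
4aQ(x,y)=(2ax+by)^2-\Delta y^2 .
\]
The map $(x,y)\mapsto(2ax+by,\,y)$ is a bijection of $(\Z/p^e)^2$, so $Q$ misses a class modulo $p^e$ if and only if $X^2-\Delta Y^2$ does.

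If $p\mid\Delta$, then modulo $p$ this form is $X^2$, which misses the nonresidues, so $\epsilon_{Q,p}=1$.

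If $\legendre{\Delta}{p}=-1$, then $X^2-\Delta Y^2$ is the norm form of $\mathbb F_{p^2}/\mathbb F_p$, which is surjective, so there is no obstruction modulo $p$. Modulo $p^2$, the congruence $X^2\equiv \Delta Y^2 \pmod p$ forces $p\mid X$ and $p\mid Y$. Hence every value divisible by $p$ is in fact $\equiv 0 \pmod{p^2}$, and the class $p$ is missed. Thus $\epsilon_{Q,p}=2$.

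If $\legendre{\Delta}{p}=1$, pick $\delta$ with $\delta^2\equiv\Delta \pmod{p^e}$ by Hensel's lemma. Then $X^2-\Delta Y^2=(X-\delta Y)(X+\delta Y)$, which is equivalent modulo $p^e$ to $UV$. This represents every class, so $\epsilon_{Q,p}=\infty$.

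\emph{$p=2$, $\Delta$ even.} Here $b=2b'$, and we arrange $a$ odd. Then
\[
aQ=(ax+b'y)^2+Dy^2,\qquad D=ac-b'^2=-\Delta/4=2^{n-2}m .
\]
The substitution is again invertible modulo $2^e$. Modulo $2$ the form $X^2$ is already surjective, so there is never an obstruction at $e=1$. Modulo $4$ we check cases directly. If $n\ge 4$, the values are squares $\{0,1\}$ and the classes $2,3$ are missed. If $n=2$, the values are $\{0,1,m,1+m\}$, which misses $3$ when $m\equiv1 \pmod 4$ and misses $2$ when $m\equiv 3 \pmod 4$. If $n=3$, the form $X^2+2mY^2$ hits all four classes modulo $4$. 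Modulo $8$, its values lie in $\{0,1,4\}\cup\{2m,2m+1,2m+4\}$, which has at most six elements, so some class is missed and $\epsilon_{Q,2}=3$.

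\emph{$p=2$, $\Delta$ odd.} Here $b$ is odd.

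If $\Delta\equiv5 \pmod 8$, then $ac$ is odd, so $Q\equiv x^2+xy+y^2 \pmod 2$, which represents both classes modulo $2$. Moreover $Q$ is even only when $x$ and $y$ are both even, and then $4\mid Q$. So the class $2$ is missed modulo $4$, giving $\epsilon_{Q,2}=2$.

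If $\Delta\equiv1 \pmod 8$, then $\Delta$ is a square in $\Z_2$. We replace $Q$ by an equivalent form with $a$ odd, for example $Q(x,x+y)$ if needed, since $a+b+c$ is odd. The roots $t=(-b\pm\sqrt\Delta)/(2a)$ are then $2$-adic integers: $b\pm\sqrt\Delta$ is even and $a$ is a unit. So $Q$ has a primitive isotropic vector over $\Z_2$. Completing it to a basis gives $Q\sim x(bx+c''y)$ over $\Z_2$ with $b$ odd, and this form represents every residue class modulo $2^e$. Hence $\epsilon_{Q,2}=\infty$.

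The main obstacle is this last $2$-adic step. The difficulty is making the isotropic-vector argument rigorous when $a$ and $c$ may be even, and checking that the change of basis over $\Z_2$ is compatible with representing residues modulo $2^e$. The latter holds because $\mathrm{GL}_2(\Z_2)$ maps onto $\mathrm{GL}_2(\Z/2^e)$.
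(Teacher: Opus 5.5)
Your proof is correct. For odd $p$ it is essentially the paper's argument: reduce to $X^2-\Delta Y^2$ by completing the square, use anisotropy modulo $p$ when $\legendre{\Delta}{p}=-1$, and use Hensel plus the factorization $(X-\delta Y)(X+\delta Y)$ when $\legendre{\Delta}{p}=1$. The differences there are only bookkeeping: you normalize $p\nmid a$ once by a $\mathrm{GL}_2(\Z)$-equivalence, and you cite surjectivity of the norm $\mathbb{F}_{p^2}\to\mathbb{F}_p$ instead of the pigeonhole count.

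At $p=2$ your route is genuinely different.
\begin{itemize}
\item For $4\mid\Delta$ you diagonalize $aQ=(ax+b'y)^2+Dy^2$ with $D=-\Delta/4=2^{n-2}m$. The modulo-$4$ analysis then splits directly on $n$, and a modulo-$8$ count is needed only for $n=3$. The paper instead proves an obstruction modulo $8$ for every $4\mid\Delta$ (\cref{lem1}(1)). It then settles the modulo-$4$ claim by cases on the parity of $h=b/2$ and on $h^2-ac \bmod 4$, in the original coordinates.
\item For $\Delta\equiv5\pmod8$ you use that $Q\equiv x^2+xy+y^2\pmod 2$ is anisotropic, so every even value is divisible by $4$. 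This is the $2$-adic twin of the $\legendre{\Delta}{q}=-1$ argument; the paper simply enumerates eight forms modulo $4$.
\item For $\Delta\equiv1\pmod8$ you split $Q$ over $\Z_2$ using a $2$-adic square root of $\Delta$. The paper (\cref{lem2}(1),(2)) instead shows inductively that every class modulo $2^k$ has a representation non-trivial modulo $2$. It fixes the next binary digit through the linear term $2^k(bx_kv+by_ku)$, using only that $b$ is odd.
\end{itemize}
Your version makes the last case parallel to the odd case $\legendre{\Delta}{p}=1$: in both, a square root of $\Delta$ splits $Q$ into a form with unit cross coefficient. The cost is importing the $2$-adic square criterion and basis completion over $\Z_2$, whereas the paper's lifting is elementary and self-contained.

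Three small corrections to your last step.
\begin{itemize}
\item ``Since $a+b+c$ is odd'' holds only when both $a$ and $c$ are even. If exactly one of them is even, just swap $x$ and $y$.
\item In $x(bx+c''y)$, what must be odd for universality modulo $2^e$ is the cross coefficient $c''$, not the $x^2$-coefficient. This is automatic, because $\mathrm{GL}_2(\Z_2)$-equivalence preserves the discriminant, so $c''^2=\Delta$. Concretely, the basis $(t,1),(1,0)$ gives $y(\sqrt{\Delta}\,x+ay)$.
\item Compatibility with residues needs only that a matrix in $\mathrm{GL}_2(\Z_2)$ reduces to an invertible matrix modulo $2^e$, and hence permutes $(\Z/2^e\Z)^2$. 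Surjectivity of $\mathrm{GL}_2(\Z_2)\to\mathrm{GL}_2(\Z/2^e\Z)$ plays no role. Also, the difficulty you flag about $a$ and $c$ being even is already removed by your normalization to $a$ odd.
\end{itemize}
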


Rewriting this theorem in terms of $\kappa_Q$, we obtain the following.

\begin{col}[Main Theorem of $\kappa_Q$]\label{main-thm1}
Let $Q(x,y) = ax^2+bxy+cy^2$ be a primitive positive definite integral binary quadratic form and $\Delta = b^2 -4ac$.
Then $\kappa_Q$ depends only on $\Delta$ and is given as follows.

If $4 \mid \Delta$, write $\Delta=-2^n m$ with $n\geq2$ and $m$ positive and odd. When $m>1$, let
$p_{\min}$ be the smallest odd prime divisor of $\Delta$. Then
\[
\kappa_Q =
\begin{cases}
8 & \text{if }m=1\text{ and }n=3, \\
4 & \text{if }m=1\text{ and }n \neq 3, \\
\min \{ 8, p_{\min} \} & \text{if }m \neq 1\text{ and }n=3, \\
\min \{ 4, p_{\min} \} & \text{if }m \neq 1\text{ and }n \neq 3.
\end{cases}
\]

\medskip
\noindent Suppose that $\Delta \equiv 1 \pmod 4$, and set
\[
p_{\min}\coloneq \min \{ p \mid p \text{ is an odd prime divisor of } \Delta \},
\qquad
q_{\min} \coloneq \min\left\{ q \text{ odd prime} \mathrel{\Big|} \legendre{\Delta}{q} = -1 \right\}.
\]
Then
\[
\kappa_Q=
\begin{cases}
3 & \text{if }3\mid\Delta,\\
4 & \text{if }3\nmid\Delta\text{ and }\Delta\equiv5\pmod8,\\
\min\{p_{\min},q_{\min}^2\}
  & \text{if }3\nmid\Delta\text{ and }\Delta\equiv1\pmod8.
\end{cases}
\]
Here $\legendre{\cdot}{\cdot}$ denotes the Legendre symbol.
\end{col}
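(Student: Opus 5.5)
The corollary follows from the Main Theorem (\cref{thm:local-kappa}) combined with the identity $\kappa_Q=\min_{p,\ \epsilon_{Q,p}<\infty}p^{\epsilon_{Q,p}}$ recorded after the definition. Since every $\epsilon_{Q,p}$ in \cref{thm:local-kappa} depends only on $\Delta$, so does $\kappa_Q$. The plan is to compute, for each case of $\Delta$, the finite contributions $p^{\epsilon_{Q,p}}$ and take their minimum. For odd $p$ these contributions are $p$ when $p\mid\Delta$ and $p^2$ when $\legendre{\Delta}{p}=-1$; primes with $\legendre{\Delta}{p}=1$ contribute nothing.

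\emph{Case $4\mid\Delta$.} Since $Q$ is positive definite, $\Delta<0$, so we may write $\Delta=-2^nm$ with $n\ge2$ and $m$ positive and odd. By \cref{thm:local-kappa}, the prime $2$ contributes $8$ if $n=3$ and $4$ otherwise. If $m=1$, this already gives the first two lines. If $m>1$, then $p_{\min}\mid\Delta$ contributes $p_{\min}\ge3$. Every other odd prime contributes either a larger prime divisor of $\Delta$ or a square $q^2\ge9>8$. So these contributions never beat the $2$-adic one or $p_{\min}$, and the minimum is $\min\{8,p_{\min}\}$ or $\min\{4,p_{\min}\}$ as claimed.

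\emph{Case $\Delta\equiv1\pmod4$.}
\begin{itemize}
\item If $3\mid\Delta$, then $3$ contributes $3$. Every other contribution is at least $4$: the prime $2$ gives $4$ or nothing, and odd primes give at least $5$ or $9$. Hence $\kappa_Q=3$.
\item If $3\nmid\Delta$ and $\Delta\equiv5\pmod8$, then $2$ contributes $4$. Every odd prime contributes either a divisor $p\ge5$ or $q^2\ge9$, so $\kappa_Q=4$.
\item If $3\nmid\Delta$ and $\Delta\equiv1\pmod8$, then $\epsilon_{Q,2}=\infty$. The minimum over odd primes is $\min\{p_{\min},q_{\min}^2\}$, since the smallest square arising comes from the smallest $q$ with $\legendre{\Delta}{q}=-1$.
\end{itemize}

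The only real point to check is that $p_{\min}$ and $q_{\min}$ exist, so the minimum is finite; this is also guaranteed in general by the local obstruction existence quoted in the introduction.
\begin{itemize}
\item $p_{\min}$ exists because $\Delta<0$ and $\Delta\equiv1\pmod8$ force $|\Delta|\ge7$, which is odd, so $\Delta$ has an odd prime divisor. (If the intended convention allowed an empty set, the minimum is read as the other term.)
\item $q_{\min}$ exists because $\Delta\equiv1\pmod4$ is not a perfect square (it is negative). Its Kronecker character is therefore a nontrivial quadratic character, so by Dirichlet's theorem (or simply by quadratic reciprocity and CRT) some odd prime $q$ has $\legendre{\Delta}{q}=-1$.
\end{itemize}

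With these existence remarks the case analysis is purely bookkeeping. I expect no step to be a genuine obstacle, the only care needed being the comparisons $q^2\ge9>8$ and that $3$ is excluded as a divisor in the last two lines.
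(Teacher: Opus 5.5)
Your proposal is correct and essentially matches the paper. The paper introduces the corollary as a rewriting of \cref{thm:local-kappa} via $\kappa_Q=\min_p p^{\epsilon_{Q,p}}$, and its case-by-case propositions in Section~2 (built on \cref{lem1} and \cref{lem2}) supply exactly the local facts your bookkeeping reads off; one small point is that in the $m=1$ case you should also note that odd primes can contribute squares $q^2\geq 9>8$, which your comparison in the $m>1$ paragraph already covers.
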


\begin{rem}
We have seen that in the case of binary quadratic forms, $\kappa_Q$ depends only on the discriminant $\Delta$. In general, this is not the case. For example, consider the following ternary quadratic forms and their minimal obstruction moduli:
\begin{enumerate}
\item[(1)] $Q_1(x,y,z) = x^2 + y^2 + 8z^2$, \quad$\kappa_{Q_1} = 4$.
\item[(2)] $Q_2(x,y,z) = x^2 + 2y^2 + 4z^2$, \quad$\kappa_{Q_2} = 16$.
\end{enumerate}
Both forms have discriminant $-64$ (using the signed determinant of the Hessian), but their minimal obstruction moduli differ.
\end{rem}

\begin{thm}[Main theorem for ternary diagonal forms]
Let $Q(x,y,z)= ax^2+by^2+cz^2$, where $a,b,c$ are positive integers and $\gcd(a,b,c)=1$.
Here we adopt the following convention: for an odd prime $p$ dividing exactly one of the coefficients, we label the coefficients so that $p \mid a$; for $p=2$, if exactly two coefficients are even we label them $a,b$, ordered so that $a \equiv 2 \pmod 4$ whenever exactly one of them is $\equiv 2 \pmod 4$; if exactly one coefficient is even we call it $a$. Then the case distinctions below are exhaustive.

\smallskip
\noindent\emph{For an odd prime $p$:}
\[
   \epsilon_{Q,p} =
   \begin{cases}
      1      & \text{if } p \text{ divides exactly two of the integers } a,b,c, \\[2pt]
      2      & \text{if } p \text{ divides exactly one of the integers } a,b,c \text{ and } \legendre{-bc}{p} = -1, \\[2pt]
      \infty & \text{if } p \nmid abc \text{, or } p \text{ divides exactly one of the integers } a,b,c \text{ and } \legendre{-bc}{p} = 1.
   \end{cases}
\]

\noindent\emph{For $p = 2$:}
\begin{enumerate}
\item[(1)] If all of $a, b, c$ are odd,
\[
   \epsilon_{Q,2} =
   \begin{cases}
      3      & \text{if } a \equiv b \equiv c \pmod 4, \\
      \infty      & \text{(otherwise)}.
   \end{cases}
\]

\item[(2)] If  $a, b$ are even and $c$ is odd,
\[
   \epsilon_{Q,2} =
   \begin{cases}
      2      & \text{if } 4 \mid a,b, \\
      3      & \text{if } a \equiv b \equiv 2 \pmod 4 \text{, or } a \equiv 2 \pmod 4 \text{ and } b \equiv 0 \pmod 8, \\
      4      & \text{if } a \equiv 2 \pmod 4 \text{ and } b \equiv 4 \pmod 8.
   \end{cases}
\]

\item[(3)] If $a$ is even and $b,c$ are odd,
\[
   \epsilon_{Q,2} =
   \begin{cases}
      2      & \text{if } 4 \mid a, \\
      \infty      & \text{if } a \equiv 2 \pmod 4 \text{ and } a+b+c \equiv 0 \pmod 8 \text{, or }\\
                  & a \equiv 2 \pmod 4 \text{ and } b+c \equiv 0 \pmod 8, \\
      4      &\text{(otherwise)}.
   \end{cases}
\]
\end{enumerate}
\end{thm}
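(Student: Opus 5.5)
We work one prime at a time. By the Chinese remainder theorem remark before \cref{thm:local-kappa}, only $\epsilon_{Q,p}$ needs to be determined. For each $p$ we compute the image of $Q$ modulo $p^e$ from the images of the one-variable squares $ax^2$, $by^2$, $cz^2$ modulo $p^e$, since the form is diagonal and the image is a sumset. To prove $\epsilon_{Q,p}=e$ we need two facts: an explicit residue $l$ missed modulo $p^e$, and surjectivity modulo $p^{e-1}$. Note that the image of $Q$ modulo $p^{e'}$ is the reduction of its image modulo $p^{e}$ when $e'\le e$. Hence surjectivity modulo $p^{e-1}$ gives surjectivity at all smaller exponents, and a miss modulo $p^e$ persists at all larger exponents. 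For $\epsilon=\infty$ we show surjectivity modulo every $p^e$ by Hensel lifting.

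\emph{Odd $p$.} If $p$ divides exactly two coefficients, say $b,c$, then $Q\equiv ax^2\pmod p$ misses a nonresidue, so $\epsilon=1$. If $p\nmid abc$, Chevalley--Warning (or counting) shows that every residue mod $p$, including $0$, is represented with some coordinate a unit. Hensel's lemma then lifts this to every $p^e$, giving $\epsilon=\infty$.

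If $p$ divides exactly $a$, then $by^2+cz^2$ is a nondegenerate binary form mod $p$ and already represents every nonzero residue. So surjectivity mod $p$ holds. Modulo $p^2$ we consider the residues $l=pt$. A representation with $(y,z)\not\equiv(0,0)$ forces $by^2+cz^2\equiv0\pmod p$, which is solvable iff $\legendre{-bc}{p}=1$. In that case Hensel lifting handles all $l$ and gives $\epsilon=\infty$; the case of $l$ a unit is lifted directly. If $\legendre{-bc}{p}=-1$, then $p\mid y,z$ and $Q\equiv ax^2\pmod{p^2}$. Writing $a=pa'$, the target $pt$ requires $a'x^2\equiv t\pmod p$, so choosing $t$ with $a't$ a nonresidue (or, if $p^2\mid a$, any $t\ne0$) gives a miss. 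Hence $\epsilon=2$.

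\emph{$p=2$.} Here we use that odd squares are $\equiv1\pmod 8$ and even squares are $0$ or $4\pmod 8$ (and $0,4\pmod{16}$ with the finer structure needed). We work modulo $16$, since the claimed exponents never exceed $4$, and for $\epsilon=\infty$ we lift.

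Case (1): $Q$ mod $8$ is a sum of three elements from $\{0,1,4\}\cdot$unit. If $a\equiv b\equiv c\pmod4$, the residue class $3a\cdot$(suitable) is missed mod $8$, e.g.\ $7$ for $a\equiv b\equiv c\equiv1$ by the three-squares obstruction. Surjectivity mod $4$ is direct. Otherwise two coefficients differ mod $4$, and lifting with an odd variable (the derivative is $2\cdot$odd, so we need agreement mod $8$ and lift via the standard $2$-adic Hensel) gives $\infty$.

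Case (2): if $4\mid a,b$, then $Q\equiv cz^2\pmod4$ misses $2$, while mod $2$ it is surjective. The subcases with $a\equiv2\pmod4$ come from tabulating the sums $ax^2+by^2+cz^2$ mod $8$ and mod $16$ and checking which residues are missed.

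Case (3): if $4\mid a$, then $by^2+cz^2$ mod $4$ with $b,c$ odd misses one class (either $3$ if $b\equiv c\equiv1$, etc.), or alternatively misses $2$ when $b\not\equiv c$... This requires care, and we check by enumeration. For $a\equiv2\pmod4$ we tabulate mod $16$ and identify exactly when a unit representation exists for every odd residue and every even residue, which yields the conditions $a+b+c\equiv0$ or $b+c\equiv0\pmod8$ for $\infty$.

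The main obstacle is the $2$-adic bookkeeping in cases (2) and (3). There, Hensel's lemma needs the strong form (a solution mod $2^{k+2}$ with $v_2(\partial Q)=1$ lifts), and we must show that every residue class mod $2^e$ has a representation with some odd coordinate in an odd-coefficient variable. I would organize this by reducing $x^2$ mod $16$ to the set $\{0,1,4,9\}$ and doing a finite case check parametrized by $a,b,c$ modulo $16$. I expect this to need a computer-style table, presented as a lemma.
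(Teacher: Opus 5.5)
\textbf{Gap: the $\epsilon_{Q,2}=\infty$ subcase of (3).} Your odd-prime argument is fine; Chevalley--Warning plus Hensel replaces the paper's pigeonhole argument and \cref{lem:p-adic-lift}. The finite-exponent $2$-adic cases are also correctly reduced to tabulating sumsets modulo $8$ and $16$, as in the paper. The problem is case (3) with $a\equiv 2\pmod 4$ and $b+c\equiv 0$ or $a+b+c\equiv 0\pmod 8$. Your only lifting tool is Hensel at points with $v_2(\nabla Q)=1$, i.e.\ with $y$ or $z$ odd. You plan to show that every residue class has such a representation, but this never holds here.

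If $y$ or $z$ is odd and $Q$ is even, then both are odd. So $Q\equiv b+c$ or $a+b+c\pmod 8$, which gives only two of the four even classes, namely $\{0,a\}$ or $\{0,-a\}$. In fact, in these subcases:
\begin{itemize}
\item every representation of a class $\equiv 4\pmod 8$ has $x,y,z$ all even, so it is not even primitive;
\item every representation of the class $a+4$ (resp.\ $a$) modulo $8$ has $x$ odd and $y,z$ even.
\end{itemize}
For instance, $2x^2+y^2+5z^2$ reaches $2$ and $4$ modulo $8$ only through such points. So a mod-$16$ table combined with your valuation-one criterion cannot produce the conditions $b+c\equiv0$ or $a+b+c\equiv0\pmod 8$. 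Applied literally, it would certify no form with $a\equiv 2\pmod 4$ at all.

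\textbf{What is missing.} You need a second lifting step for points with $v_2(\nabla Q)=2$, i.e.\ $x$ odd, or $y\equiv 2$, or $z\equiv 2\pmod 4$. Such points must be verified modulo $2^{2\cdot 2+1}=32$, not $16$. This is the paper's \cref{lem:32-adic-lift}, applied to explicit representatives modulo $32$:
\begin{itemize}
\item $(0,2,0),(2,2,0),(0,2,4),(2,2,4)$ for the four classes $\equiv 4\pmod 8$;
\item $(1,2,0),(1,2,4),(1,0,2),(1,4,2)$ when $b+c\equiv 0\pmod 8$, or $(1,0,0),(1,4,0),(1,2,2),(3,2,2)$ when $a+b+c\equiv 0\pmod 8$, for the remaining class.
\end{itemize}
You could instead handle the classes $\equiv 4\pmod 8$ via $Q(2x,2y,2z)=4Q(x,y,z)$ and the odd classes. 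The remaining class, however, genuinely requires lifting from points with $x$ odd.

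\textbf{Smaller slip in case (1).} The odd values modulo $8$ are $\{a,a+4,a+b+c\}$, so the missed class is $a+b+c+4$, not $7$ in general. For example, $x^2+y^2+5z^2$ represents $7$ but misses $3$. For the $\infty$ part of case (1) you also still need to exhibit, for all eight classes modulo $8$, a representative with an odd coordinate, as the paper does.
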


\begin{col}[Main theorem for $\kappa_Q$, ternary case]\label{main-thm2}
Let $Q(x,y,z)= ax^2+by^2+cz^2$, where $a,b,c$ are positive integers and $\gcd(a,b,c)=1$.
Define
\[
p_{\min}\coloneq \min \{ p \text{ odd prime} \mid p \text{ divides exactly two of }a,b,c \}.
\]
For each odd prime $q$ dividing exactly one coefficient, let $r_q$ and $s_q$ denote the other two coefficients, and define
\[
q_{\min}\coloneq \min\left\{q \text{ odd prime} \mathrel{\Big|}
q \text{ divides exactly one of }a,b,c\text{ and }\legendre{-r_q s_q}{q}=-1\right\}.
\]
We use the convention $\min\varnothing=\infty$.
For the $2$-adic cases below, we use the coefficient-labeling convention of the preceding theorem.

\begin{enumerate}
\item[(1)] If all of $a, b, c$ are odd,
\[
   \kappa_Q =
   \begin{cases}
      \min \{ 8,p_{\min} \}      & \text{if } a \equiv b \equiv c \pmod 4, \\
      \min \{ p_{\min}, q_{\min}^2 \}      & \text{(otherwise)}.
   \end{cases}
\]

\item[(2)] If exactly two of $a, b, c$ are even,
\[
   \kappa_Q =
   \begin{cases}
      \min \{ 4, p_{\min} \}      & \text{if } 4 \mid a,b, \\
      \min \{ 8, p_{\min} \}      & \text{if } a \equiv b \equiv 2 \pmod 4 \text{, or } a \equiv 2 \pmod 4 \text{ and } b \equiv 0 \pmod 8, \\
      \min \{ 16, p_{\min}, q_{\min}^2 \}      & \text{if } a \equiv 2 \pmod 4 \text{ and } b \equiv 4 \pmod 8.
   \end{cases}
\]

\item[(3)] If exactly one of $a, b, c$ is even,
\[
   \kappa_Q =
   \begin{cases}
      \min \{ 4, p_{\min} \}      & \text{if } 4 \mid a, \\
      \min \{ p_{\min}, q_{\min}^2 \}      & \text{if } a \equiv 2 \pmod 4 \text{ and } a+b+c \equiv 0 \pmod 8 \text{, or }\\
                  & a \equiv 2 \pmod 4 \text{ and } b+c \equiv 0 \pmod 8, \\
      \min \{ 16, p_{\min}, q_{\min}^2 \}      &\text{(otherwise)}. 
   \end{cases}
\]
\end{enumerate}
\end{col}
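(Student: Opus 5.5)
This corollary follows from the preceding ternary theorem together with the identity $\kappa_Q=\min_{p:\,\epsilon_{Q,p}<\infty} p^{\epsilon_{Q,p}}$ established in the introduction. The plan is to split this minimum into its odd-prime part and its $2$-adic part, evaluate each from the theorem, and then take the minimum of the two.

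\emph{Odd part.} For an odd prime $p$, the theorem gives three possibilities:
\begin{itemize}
\item If $p$ divides exactly two coefficients, then $p^{\epsilon_{Q,p}}=p$.
\item If $p$ divides exactly one coefficient and $\legendre{-r_ps_p}{p}=-1$, then $p^{\epsilon_{Q,p}}=p^2$. Here $r_p,s_p$ are the other two coefficients, which matches the theorem's $-bc$ under the labeling $p\mid a$.
\item In every other case, $\epsilon_{Q,p}=\infty$.
\end{itemize}
Note that $p$ cannot divide all three coefficients, since $\gcd(a,b,c)=1$. Hence the minimum over odd primes is $\min\{p_{\min},q_{\min}^2\}$, with the convention $\min\varnothing=\infty$.

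\emph{$2$-adic part.} We read off $2^{\epsilon_{Q,2}}$ in each of the three parity cases of the theorem.
\begin{itemize}
\item \emph{All coefficients odd.} We get $8$ or $\infty$.
\item \emph{Exactly two even.} We get $4$, $8$ or $16$.
\item \emph{Exactly one even.} We get $4$, $\infty$ or $16$.
\end{itemize}
Combining gives $\kappa_Q=\min\{2^{\epsilon_{Q,2}},p_{\min},q_{\min}^2\}$ in every case.

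\emph{Simplifications.} It remains to justify dropping $q_{\min}^2$ wherever the stated formula omits it. Every odd prime satisfies $q\ge 3$, so $q_{\min}^2\ge 9$. In the cases where $2^{\epsilon_{Q,2}}\in\{4,8\}$, we therefore have $q_{\min}^2>2^{\epsilon_{Q,2}}$, and the term $q_{\min}^2$ never attains the minimum. This explains the forms $\min\{4,p_{\min}\}$ and $\min\{8,p_{\min}\}$.

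When $2^{\epsilon_{Q,2}}=16$, the possibility $q_{\min}=3$ gives $9<16$, so $q_{\min}^2$ must be kept; this yields $\min\{16,p_{\min},q_{\min}^2\}$. When $\epsilon_{Q,2}=\infty$, only the odd part remains, namely $\min\{p_{\min},q_{\min}^2\}$.

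The main point needing care is that the corollary's case conditions match the theorem's case conditions exactly under the same labeling convention. Once that is checked, the rest is bookkeeping: for example, in case (3) the condition "otherwise" in the corollary corresponds to $\epsilon_{Q,2}=4$ in the theorem.
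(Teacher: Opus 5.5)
Your proposal is correct and follows the same route as the paper: the corollary is read off from the ternary $\epsilon_{Q,p}$ theorem via $\kappa_Q=\min_p p^{\epsilon_{Q,p}}$, with the odd-prime contribution collapsing to $\min\{p_{\min},q_{\min}^2\}$. Your observation that $q_{\min}^2\ge 9$ exceeds $2^{\epsilon_{Q,2}}\in\{4,8\}$ justifies omitting $q_{\min}^2$ in those cases, a step the paper leaves implicit.
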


When $Q(x,y,z)$ is not diagonal, the presence of numerous cross terms renders techniques
such as completing the square difficult to apply. Thus, explicitly
determining $\epsilon_{Q,p}$ and $\epsilon_{Q,2}$ in particular appears
to be considerably more difficult.

\section{Binary case}
In this section, we assume that $Q$ is a primitive positive definite integral binary quadratic form.

\subsection{The case \texorpdfstring{$\Delta \equiv 0 \pmod {4}$}{Delta congruent to 0 modulo 4}}
In this case, we will need the following lemma, which is a classical result of the theory of 
quadratic forms. For example, it is mentioned in a more general form in~\cite{cassels}. We will give an elementary proof for the special case of quadratic
forms. 

\begin{lem}\label{lem1}
Let $p$ be an odd prime.
\begin{enumerate}
\item[(1)] If $4 \mid \Delta$, then $Q(x,y)$ has an obstruction modulo $8$.
\item[(2)] $Q(x,y)$ has no obstruction modulo $2$.
\item[(3)] We have $\gcd(p,\Delta) = 1$ if and only if $Q(x,y)$ has no obstruction modulo $p$.
\end{enumerate}
\end{lem}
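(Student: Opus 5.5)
The three parts are independent. In each I would complete the square after arranging that the leading coefficient is a unit at the relevant prime. This arrangement is harmless: an invertible integral change of variables $(x,y)\mapsto (x,y)\gamma$ with $\gamma\in GL_2(\Z)$ does not change the set of values of $Q$, hence does not change which residues are represented modulo any $k$.

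\textbf{Part (2).} This is a direct check. We have $Q(0,0)=0$, $Q(1,0)=a$, $Q(0,1)=c$ and $Q(1,1)=a+b+c$. If $a$ or $c$ is odd, an odd value is represented. Otherwise primitivity forces $b$ to be odd, so $a+b+c$ is odd. Hence both classes modulo $2$ are represented.

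\textbf{Part (1).} If $4\mid\Delta$, then $b=2b'$ is even, so by primitivity $a$ or $c$ is odd; after swapping $x,y$ we may assume $a$ is odd. Put $D=ac-b'^2=-\Delta/4$. Then
\[
aQ(x,y)=(ax+b'y)^2+Dy^2 .
\]
Multiplication by the odd integer $a$ permutes the residues modulo $8$, so it suffices to show that $X^2+Dy^2$ misses some residue modulo $8$.

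Squares modulo $8$ lie in $\{0,1,4\}$. Hence the values of $X^2+Dy^2$ modulo $8$ lie in
\[
\{0,1,4\}\cup\{D,D+1,D+4\}\cup\{4D,4D+1,4D+4\}.
\]
Since $4D\in\{0,4\}\pmod 8$, the last set is contained in $\{0,1,4,5\}$. The whole union therefore lies in $\{0,1,4,5\}\cup\{D,D+1,D+4\}$, which has at most $7$ elements. So some residue modulo $8$ is not represented.

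\textbf{Part (3).} Let $p$ be an odd prime. First I would arrange that $p\nmid a$, using primitivity. If $p\nmid a$ we are done. If $p\mid a$ but $p\nmid c$, swap the variables. If $p\mid a$ and $p\mid c$, then $p\nmid b$, and $Q(1,1)\equiv b$ is a unit; the substitution $(x,y)\mapsto(x+y,y)$ then gives a form with leading coefficient $Q(1,1)$.

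With $p\nmid a$ we have
\[
4aQ(x,y)=(2ax+by)^2-\Delta y^2,
\]
and $(x,y)\mapsto(2ax+by,\,y)$ is a bijection of $(\Z/p)^2$. So the values of $Q$ modulo $p$ are the values of $X^2-\Delta y^2$ modulo $p$, multiplied by the unit $(4a)^{-1}$.

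\emph{Case $p\mid\Delta$.} The values are $(4a)^{-1}$ times the squares modulo $p$. This set has $(p+1)/2<p$ elements, so $Q$ has an obstruction modulo $p$.

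\emph{Case $p\nmid\Delta$.} The value $0$ is taken at $X=y=0$. For $t\not\equiv0$, the sets $\{X^2\}$ and $\{t+\Delta y^2\}$ each have $(p+1)/2$ elements in $\Z/p$. By pigeonhole they intersect, so $X^2-\Delta y^2$ represents every residue modulo $p$, and therefore so does $Q$.

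The only step needing care is the reduction to a unit leading coefficient in (1) and (3). It is the point where primitivity is used, and it is settled by the small case analysis above.
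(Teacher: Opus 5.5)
Your proof is correct and follows essentially the paper's route: a unit leading coefficient, completing the square, squares modulo $8$ lying in $\{0,1,4\}$, and the pigeonhole count modulo $p$. Your $GL_2(\Z)$ reduction to $p\nmid a$ stands in for the paper's separate subcases $Q\equiv bxy$ and $Q\equiv cy^2 \pmod p$, and there is one trivial slip: $Q(x+y,y)$ still has $x^2$-coefficient $a$, so the substitution that puts $Q(1,1)\equiv b$ in the leading position is $(x,y)\mapsto(x,x+y)$.
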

\begin{proof} Let $Q(x,y) = ax^2+bxy+cy^2$ be a binary quadratic form.
\begin{enumerate}
\item[(1)] Since $4 \mid \Delta$, the coefficient $b$ is even, say $b = 2h$; since $Q(x,y)$ is primitive, at least one of $a$ and $c$ is odd, and interchanging $a$ and $c$ if necessary we may assume that $a$ is odd.
Completing the square, we get $Q(x,y) = ax^2+2hxy+c y^2 = a(x + \frac{h}{a} y)^2 + (c- \frac{h^2}{a}) y^2$.
Since $a^2 \equiv 1 \pmod 8$, we have $a^{-1} \equiv a \pmod 8$ and hence $Q \equiv a(x + ha y)^2 + (c- h^2 a) y^2 \pmod 8$.
Since the map
\begin{align*}
f:(\Z /8\Z)^2 &\longrightarrow (\Z /8\Z)^2,\\
(x,y) &\longmapsto (x + ha y , y) =: (X,Y),
\end{align*}
is bijective, the value sets of $Q$ and of $aX^2 + c' Y^2$ modulo $8$ coincide, where $c' = c-h^2 a$.
Thus, using $X^2 \equiv 0,1,4 \pmod 8$, the value set of $Q$ modulo $8$ is
\[
\mathcal V_8(Q) = \{0,a,4a,c',a+c',4a+c',4c',a+4c',4(a+c')\}.
\]
If $c'$ is even, we have $4c' \equiv 0 \pmod 8$ and therefore
\[
\#\mathcal V_8(Q) = \#\{0,a,4a,c',a+c',4a+c'\} \leq 6.
\]
If $c'$ is odd, then $4c' \equiv 4a \pmod 8$ and $4(a+c') \equiv 0 \pmod 8$, and therefore
\[
\#\mathcal V_8(Q) = \#\{0,a,4a,c',a+c',4a+c',a+4c'\}\leq 7.
\]
In both cases $\#\mathcal V_8(Q)<8$, and therefore $Q(x,y)$ has an obstruction modulo $8$.

\item[(2)] We write $[a,b,c]$ for the form $ax^2+bxy + cy^2$ with coefficients reduced modulo $2$.
Since $Q(x,y)$ is primitive, $Q$ is congruent modulo $2$ to one of \\
$\{[0,0,1],[0,1,0],[0,1,1],[1,0,0],[1,0,1],[1,1,0],[1,1,1]\}$.
None of these forms has an obstruction modulo $2$.

\item[(3)($\Rightarrow$)] After possibly interchanging $a$ and $c$, we may assume that either $\gcd(a,p) = 1$, or $p \mid a,c$ and $p \nmid b$.

\item[(3-i)]$\gcd(a,p) = 1$ \\
Since $\gcd(4a,p)=1$, multiplication by $4a$ permutes the residue classes modulo $p$, so we may consider $4aQ$ instead of $Q(x,y)$.
Completing the square, we have $4aQ \equiv (2ax+by)^2 - \Delta y^2 \pmod p$.
Similarly to (1), it suffices to consider $X^2 - \Delta Y^2 \pmod p$.
Fix $k \in \Z /p\Z$ and consider the two sets $ \{X^2 \pmod p\}$ and $\{\Delta Y^2 + k \pmod p \}$;
we have $\# \{X^2 \pmod p\}$ = $\# \{\Delta Y^2 + k \pmod p \} = \frac{p+1}{2}$.
By the pigeonhole principle, the two sets intersect, i.e.\ there exist $X_0,Y_0$ such that
$X_0^2 - \Delta Y_0^2 \equiv k \pmod p$.
Thus, $4aQ$, and hence $Q(x,y)$, has no obstruction modulo $p$.

\item[(3-ii)]$p \mid a,c$ and $p \nmid b$ \\
By hypothesis, we get $Q \equiv bxy \pmod p$.
Taking $x \equiv b^{-1}$ and $y \equiv k \pmod p$, we have $Q \equiv k \pmod p$.
Hence, $Q(x,y)$ has no obstruction modulo $p$.

\item[($\Leftarrow$)] We prove the contrapositive: if $p \mid \Delta$, then $Q(x,y)$ has an obstruction modulo $p$.
If $p \nmid a$, the calculation in (3-i) gives $4aQ \equiv (2ax+by)^2 \pmod p$. Thus, $Q$ takes at most $\frac{p+1}{2} < p$ values modulo $p$.
If $p \mid a$, then $p \mid b^2 = \Delta + 4ac$, so $p \mid b$ and $Q \equiv cy^2 \pmod p$ with $p \nmid c$ by primitivity, again at most $\frac{p+1}{2}$ values.
Therefore, $Q(x,y)$ has an obstruction modulo $p$. This proves the lemma.

\end{enumerate}
\end{proof}

We now give the proof of \cref{main-thm1} in the case when $4 \mid \Delta$.

\begin{prop}\label{prop:min-obs-even}
Let $4 \mid \Delta$ and write $\Delta = -2^n m$ with $n \geq 2$ and $m$ a positive odd integer.
Let $p_{\min}\coloneq \min \{ p \mid p \text{ is an odd prime divisor of } \Delta \}$.
We set $p_{\min}=\infty$ when $\Delta$ has no odd prime divisor.
Then
\[
\kappa_Q = 
\begin{cases}
8 & \text{if } m=1 \text{ and } n=3, \\
4 & \text{if } m=1 \text{ and } n \neq 3, \\
\min \{ 8, p_{\min} \} & \text{if } m \neq 1 \text{ and } n=3, \\
\min \{ 4, p_{\min} \} & \text{if } m \neq 1 \text{ and } n \neq 3.
\end{cases}
\]
\end{prop}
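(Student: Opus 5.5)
By the Chinese remainder reduction in the introduction, $\kappa_Q=\min_p p^{\epsilon_{Q,p}}$. I will determine each candidate prime power below $8$ separately. The only moduli that can compete are $2,4,8$ and the odd primes $p<8$. Any odd prime power $p^e$ with $e\ge 2$ is at least $9>8$, and $8$ is always an obstruction modulus by \cref{lem1}(1). Hence
\[
\kappa_Q=\min\{2^{\epsilon_{Q,2}},\ p_{\min}\}
\]
once we know $\epsilon_{Q,2}\in\{2,3\}$. Here we use \cref{lem1}(3): an odd prime $p$ gives an obstruction modulo $p$ exactly when $p\mid\Delta$, i.e. $p\mid m$. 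Thus the smallest odd modulus of the form $p^1$ is $p_{\min}$. If $m=1$ there is no such prime, and odd prime squares are irrelevant since they exceed $8$. \cref{lem1}(2) gives $\epsilon_{Q,2}\ge 2$ and \cref{lem1}(1) gives $\epsilon_{Q,2}\le 3$. So everything reduces to showing that $Q$ has an obstruction modulo $4$ if and only if $n\ne 3$.

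For this I reuse the reduction from the proof of \cref{lem1}(1). Write $b=2h$ with $a$ odd. Since $a$ is invertible modulo $4$, the same bijective substitution shows that the value set of $Q$ modulo $4$ equals that of $aX^2+c'Y^2$ with $c'=c-h^2a'$, where $a'$ is an inverse of $a$ modulo $4$. Moreover $-\Delta/4=ac-h^2$, so $ac'\equiv ac-h^2=2^{n-2}m \pmod 4$. Since $X^2\in\{0,1\}$ modulo $4$, the value set is $\{0,a,c',a+c'\}$ modulo $4$, and there is no obstruction modulo $4$ exactly when these four values are distinct.

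I then split into cases.
\begin{itemize}
\item If $n=2$, then $c'$ is odd. Then $a+c'$ is even and $\{0,a+c'\}$ together with the odd values $a,c'$ cannot fill all of $\Z/4\Z$: either $c'\equiv a$, or $a+c'\equiv 0$.
\item If $n\ge 4$, then $c'\equiv 0\pmod 4$, so the value set is $\{0,a\}$ and there is an obstruction.
\item If $n=3$, then $c'\equiv 2\pmod 4$ and the values are $\{0,a,2,a+2\}$, which is all of $\Z/4\Z$. So there is no obstruction modulo $4$, and $\epsilon_{Q,2}=3$.
\end{itemize}
This gives $2^{\epsilon_{Q,2}}=8$ if $n=3$ and $4$ otherwise. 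Combining with $p_{\min}$ (or $\infty$ when $m=1$) yields the four cases of the proposition.

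The only delicate step is transferring the completing-the-square substitution to modulus $4$: one needs that the inverse of $a$ modulo $4$ can be used in place of $h/a$, and that $c'$ is pinned down modulo $4$ by $\Delta$ alone. Once $ac'\equiv 2^{n-2}m$ is checked, the case analysis is routine.
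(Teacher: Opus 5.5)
Your proof is correct, and its overall structure matches the paper's: \cref{lem1} reduces everything to the claim that $Q$ has an obstruction modulo $4$ if and only if $n\neq 3$. You prove that claim differently. You diagonalize modulo $4$ as in \cref{lem1}(1) and read off $c'\bmod 4$ from $ac'\equiv 2^{n-2}m\pmod 4$. The cases $n=2$, $n=3$, $n\ge 4$ then follow immediately, and it becomes clear why only $n=3$ escapes: it is the only case with $c'\equiv 2$, which makes $\{0,a,c',a+c'\}$ a complete residue system. The paper instead works with the undiagonalized form. It splits on the parity of $h$ and on $h^2-ac\bmod 4$, lists the possible pairs $(a\bmod 4,\,c\bmod 4)$, and leaves each resulting explicit form modulo $4$ to a direct check. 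Your version needs the inverse of $a$ modulo $4$, but it is more uniform and replaces the enumeration with a three-line argument. The paper's version avoids the change of variables at the cost of more subcases.
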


\begin{proof}
Since $Q(x,y)$ is primitive and $4 \mid \Delta$, at least one of $a$ and $c$ is odd. After interchanging $a$ and $c$ if necessary, we may assume that $a$ is odd. Also, $b = 2h$ for some integer $h$.
Note that $\Delta = 4(h^2 - ac)$, so $h^2 - ac = -2^{n-2}m$.
By \cref{lem1}, $Q(x,y)$ has no obstruction modulo $2$, has an obstruction modulo $8$,
and has no obstruction modulo any odd prime $p$ with $\gcd(p,\Delta)=1$.
Therefore, the value of $\kappa_Q$ is determined by whether $Q(x,y)$ has an obstruction modulo $4$:
if it does, then $\kappa_Q = \min\{4,\, p_{\min}\}$ (with $p_{\min} = \infty$ when $m=1$);
if it does not, then $\kappa_Q = \min\{8,\, p_{\min}\}$.

It thus suffices to prove the following claim.

\begin{clm}
$Q(x,y)$ has no obstruction modulo $4$ if and only if $n = 3$.
\end{clm}

\medskip
\noindent\textbf{Case $n = 3$} (i.e.\ $h^2 - ac \equiv 2 \pmod{4}$)\textbf{.}\;
We show that $Q(x,y)$ has no obstruction modulo $4$.

\begin{enumerate}
  \item If $h$ is even, then $ac \equiv 2 \pmod{4}$, so
  $(a \bmod 4,\, c \bmod 4) \in \{(1,2),\,(3,2)\}$.
  Hence $Q \equiv \pm x^2 + 2y^2 \pmod{4}$, and a direct check shows that
  every residue class modulo $4$ is represented.

  \item If $h$ is odd, then $ac \equiv 3 \pmod{4}$, so
  $(a \bmod 4,\, c \bmod 4) \in \{(1,3),\,(3,1)\}$.
  Hence $Q \equiv x^2 + 2xy - y^2$ or $Q \equiv -x^2 + 2xy + y^2 \pmod{4}$
  respectively, and again every residue class modulo $4$ is represented.
\end{enumerate}

\medskip
\noindent\textbf{Case $n \neq 3$} (i.e.\ $h^2 - ac \equiv 0, 1, \text{ or } 3 \pmod{4}$)\textbf{.}\;
We show that $Q(x,y)$ has an obstruction modulo $4$ by examining each subcase.

\begin{enumerate}
  \item If $h^2 - ac \equiv 0 \pmod{4}$ and $h$ is even, then
  $ac \equiv 0 \pmod{4}$, so $c \equiv 0 \pmod{4}$ (since $a$ is odd),
  giving $Q \equiv ax^2 \pmod{4}$.

  \item If $h^2 - ac \equiv 0 \pmod 4$ and $h$ is odd, then
  $ac \equiv 1 \pmod 4$, so
  $(a \bmod 4,\, c \bmod 4) \in \{(1,1),\,(3,3)\}$,
  giving $Q \equiv \pm(x^2 + 2xy + y^2) \pmod{4}$.

  \item If $h^2 - ac \equiv 1 \text{ or } 3 \pmod{4}$ and $h$ is even, \\ 
  then $(a \bmod 4,\, c \bmod 4) \in \{(1,1),\,(3,3),\,(1,3),\,(3,1)\}$,
  giving $Q \equiv \pm x^2 \pm y^2 \pmod{4}$.

  \item If $h^2 - ac \equiv 1 \text{ or } 3 \pmod{4}$ and $h$ is odd, \\ 
  then $(a \bmod 4,\, c \bmod 4) \in \{(1,0),\,(3,0),\,(1,2),\,(3,2)\}$,
  giving $Q \equiv \pm x^2 + 2xy \pmod{4}$ or
  $Q \equiv \pm x^2 + 2xy + 2y^2 \pmod{4}$.
\end{enumerate}

In each subcase, a direct check confirms that $Q(x,y)$ has an obstruction
modulo $4$.
\end{proof}

\subsection{The case \texorpdfstring{$\Delta \equiv 1 \pmod 4$}{Delta congruent to 1 modulo 4}}

Note that $b$ is odd in this case.
The argument splits further into three subcases. First, we consider the case $\Delta \equiv 0 \pmod 3$.
\begin{prop}\label{prop:kappa-three}
Let $Q(x,y)$ be a primitive positive definite integral binary quadratic form with $\Delta \equiv 1 \pmod 4$
and $\Delta \equiv 0 \pmod 3$. Then
$\kappa_Q = 3$.
\end{prop}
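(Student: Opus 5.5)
Since $\kappa_Q$ is a minimum over moduli $k\geq 1$, proving $\kappa_Q=3$ comes down to two facts. First, $Q$ has an obstruction modulo $3$. Second, $Q$ has no obstruction modulo $1$ or $2$. Modulus $1$ is trivial, since every integer is congruent to $0 \pmod 1$ and so nothing can be missed. Modulus $2$ is exactly \cref{lem1}(2), which says $Q(x,y)$ has no obstruction modulo $2$ for any primitive form. Note that this part of the lemma uses only primitivity, not the parity of $\Delta$, so it applies equally well when $\Delta\equiv 1 \pmod 4$.

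For the obstruction modulo $3$, I will invoke \cref{lem1}(3) with $p=3$. Its statement is: $\gcd(p,\Delta)=1$ if and only if $Q$ has no obstruction modulo $p$. Since $3\mid\Delta$ by hypothesis, $\gcd(3,\Delta)\neq 1$, and so $Q$ has an obstruction modulo $3$. I should check that the proof of the direction ($\Leftarrow$) does not secretly use $4\mid\Delta$.

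\begin{itemize}
\item If $3\nmid a$, then $4aQ\equiv (2ax+by)^2 \pmod 3$. Multiplication by $4a$ permutes $\Z/3\Z$, so $Q$ takes at most $(3+1)/2=2$ values modulo $3$.
\item If $3\mid a$, then $3\mid b^2=\Delta+4ac$, hence $3\mid b$. Then $Q\equiv cy^2 \pmod 3$ with $3\nmid c$ by primitivity, which again gives at most $2$ values.
\end{itemize}

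Neither branch uses the $2$-adic nature of $\Delta$, so the lemma applies verbatim.

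Combining the two facts, $Q$ has an obstruction modulo $3$ but none modulo $1$ or $2$, hence $\kappa_Q=3$. There is no real obstacle here. The only point needing care is confirming that \cref{lem1}(2) and (3) were proved without the hypothesis $4\mid\Delta$, which holds because only part (1) assumed it. If one prefers a self-contained check, the two-line computation above can be written out directly in place of citing the lemma.
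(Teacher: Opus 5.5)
Your proof is correct and is the same as the paper's: no obstruction modulo $2$ by \cref{lem1}(2), and an obstruction modulo $3$ from $3\mid\Delta$ via \cref{lem1}(3). Your extra checks are sound (modulus $1$ is trivial, and the proofs of parts (2) and (3) of that lemma never use $4\mid\Delta$); they spell out what the paper leaves implicit.
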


\begin{proof}
By \cref{lem1}\,(2), $Q(x,y)$ has no obstruction modulo~$2$,
whereas $Q(x,y)$ has an obstruction modulo~$3$ since $3 \mid \Delta$.
Therefore, $\kappa_Q = 3$.
\end{proof}
Next, we consider the case $3 \nmid \Delta$ and $\Delta \equiv 5 \pmod 8$.
\begin{prop}
Let $Q(x,y)$ be a primitive positive definite integral binary quadratic form whose discriminant satisfies
$3 \nmid \Delta$ and $\Delta \equiv 5 \pmod 8$. Then
$\kappa_Q = 4$; moreover, $Q \not\equiv 2 \pmod 4$.
\end{prop}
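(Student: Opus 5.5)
We need to show three things: (i) $Q$ has no obstruction modulo $1,2,3$; (ii) $Q\not\equiv 2 \pmod 4$; and (iii) consequently $\kappa_Q=4$. Modulus $1$ is trivial. Modulus $2$ is handled by \cref{lem1}\,(2). Since $3\nmid\Delta$, \cref{lem1}\,(3) shows there is no obstruction modulo $3$. Hence every modulus $k<4$ is obstruction-free, because $k\in\{1,2,3\}$. Once (ii) is established we get $\kappa_Q\le 4$, and therefore $\kappa_Q=4$.

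For (ii), first reduce the coefficients modulo $2$. Since $\Delta\equiv 1\pmod 4$, the coefficient $b$ is odd. Then $\Delta\equiv 5\pmod 8$ together with $b^2\equiv 1\pmod 8$ gives $-4ac\equiv 4\pmod 8$, so $ac$ is odd; that is, $a$, $b$, $c$ are all odd. It then suffices to show that $Q(x,y)$ is never $\equiv 2\pmod 4$.

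Split by the parities of $x$ and $y$:
\begin{itemize}
\item If $x,y$ are both even, then $Q\equiv 0\pmod 4$.
\item If exactly one of $x,y$ is odd, then $Q$ is odd, since it equals $ax^2$ or $cy^2$ plus even terms.
\item If $x,y$ are both odd, then $Q\equiv a+b+c\equiv 1\pmod 2$.
\end{itemize}
So $Q$ is odd unless $x\equiv y\equiv 0\pmod 2$, in which case $Q\equiv 0\pmod 4$. In particular $Q$ never takes the value $2$ modulo $4$.

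An alternative route would be to multiply by $4a$ and use $4aQ=(2ax+by)^2-\Delta y^2$, but the parity argument above is simpler. The only point that needs care is confirming that $a$ and $c$ are both odd, which follows from $\Delta\equiv 5\pmod 8$ as shown. This is the sole real content of the proof; the rest is immediate from \cref{lem1}.
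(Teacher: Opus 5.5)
Your proof is correct and follows the paper's route: Lemma~\ref{lem1} rules out obstructions modulo $2$ and $3$, the condition $\Delta\equiv 5\pmod 8$ forces $a,b,c$ to be odd, and then the residue class $2$ modulo $4$ is never attained. Your parity argument ($Q$ is odd unless $x,y$ are both even, in which case $4\mid Q$) is a uniform version of the paper's case-by-case check of the eight forms modulo $4$.
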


\begin{proof}
Since $3 \nmid \Delta$ and $Q$ is primitive, $Q(x,y)$ has no obstruction modulo $2$ or $3$ by \cref{lem1}(2) and (3).
So we consider $Q(x,y) \pmod 4$.
Since $b$ is odd and $b^2 - 4ac \equiv 5 \pmod{8}$,
it follows that $a$, $b$, and $c$ are all odd.
Hence $Q(x,y)$ is equivalent modulo~$4$ to one of
\[
    [1,1,1],\quad [1,1,3],\quad [1,3,1],\quad [1,3,3],\quad
    [3,1,1],\quad [3,1,3],\quad [3,3,1],\quad [3,3,3].
\]
Since each of these eight forms has an obstruction modulo $4$
and none of them represents $2$ modulo $4$, the proposition follows.
\end{proof}

Finally, we treat the case $3 \nmid \Delta$ and $\Delta \equiv 1 \pmod 8$.

\begin{prop}\label{prop:min-obs-odd}
Let $Q(x,y)$ be a primitive positive definite integral binary quadratic form whose discriminant satisfies
$3 \nmid \Delta$ and $\Delta \equiv 1 \pmod 8$.
Let $p_{\min}$ be the smallest odd prime divisor of $\Delta$ and let
$q_{\min} \coloneq \min\{ q \text{ odd prime} \mid \legendre{\Delta}{q} = -1 \}$, where $\legendre{\cdot}{\cdot}$ is the Legendre symbol. Then
\[
\kappa_Q = \min \{ p_{\min} , q_{\min}^2 \}.
\]
\end{prop}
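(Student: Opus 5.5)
We will compute $\epsilon_{Q,p}$ for every prime $p$ and then use $\kappa_Q=\min_p p^{\epsilon_{Q,p}}$.

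\textbf{The prime $2$.} Since $\Delta\equiv 1\pmod 8$ and $b$ is odd, we have $b^2\equiv 1\pmod 8$. Hence $4ac\equiv 0\pmod 8$, so $ac$ is even. Interchanging $a$ and $c$ if necessary, we may assume $c$ is even. Over $\Z_2$ the polynomial $aX^2+bX+c$ has discriminant $\Delta$, which is a $2$-adic square. Its reduction modulo $2$ has the simple root $X=0$, so by Hensel's lemma it has a root. Hence $Q$ factors over $\Z_2$ as $(\alpha x+\beta y)(\gamma x+\delta y)$ with $\alpha\delta-\beta\gamma$ a $2$-adic unit. (Equivalently, $Q$ is $\Z_2$-equivalent to the hyperbolic form $xy$.) The substitution $(x,y)\mapsto(\alpha x+\beta y,\gamma x+\delta y)$ is invertible modulo every $2^e$. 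Since $xy$ represents every residue modulo $2^e$ (take $x=1$), we get $\epsilon_{Q,2}=\infty$.

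\textbf{Odd primes $p\mid\Delta$.} \Cref{lem1}(3) gives $\epsilon_{Q,p}=1$.

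\textbf{Odd primes with $\legendre{\Delta}{p}=1$.} The same Hensel argument as at $2$ works. We may assume $p\nmid a$; otherwise swap $a$ and $c$, and if $p$ divides both, then $Q\equiv bxy$ and the form is already split. Then $4aQ=(2ax+by)^2-\Delta y^2$ with $\Delta$ a nonzero square modulo $p$, hence a $p$-adic square $d^2$. So $4aQ=(2ax+by-dy)(2ax+by+dy)$, a product of two independent linear forms. The change of variables has determinant $4ad$, a $p$-adic unit, so it is invertible modulo $p^e$. Since $4a$ is a unit, every residue modulo $p^e$ is represented, and $\epsilon_{Q,p}=\infty$.

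\textbf{Odd primes with $\legendre{\Delta}{p}=-1$.} Here $\epsilon_{Q,p}\ge 2$ by \cref{lem1}(3). For the upper bound, we show $Q\not\equiv p\pmod{p^2}$. It suffices to treat $4aQ\equiv X^2-\Delta Y^2$ with $p\nmid a$. If $X^2-\Delta Y^2\equiv 0\pmod p$, then $p\mid X$ and $p\mid Y$, since $\Delta$ is a non-residue. Then $X^2-\Delta Y^2\equiv 0\pmod{p^2}$. So $4aQ$, and hence $Q$, never takes a value with $p$-adic valuation exactly $1$ modulo $p^2$. The change of variables is invertible modulo $p^2$, so this transfers back to $Q$. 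Thus $\epsilon_{Q,p}=2$.

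The case $p\mid a$ needs separate care. If $p\mid a$ and $p\mid c$, then $\Delta\equiv b^2$ would be a square, a contradiction. So $p\nmid c$, and we swap $a$ and $c$.

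\textbf{Conclusion.} The primes $p\mid\Delta$ contribute $p$, those with $\legendre{\Delta}{p}=-1$ contribute $p^2$, and all others contribute $\infty$. Therefore $\kappa_Q=\min\{p_{\min},q_{\min}^2\}$.

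The set defining $q_{\min}$ is nonempty. Since $\Delta$ is negative, it is not a perfect square, so some prime $q$ has $\legendre{\Delta}{q}=-1$. The $2$-adic splitting step is the main point needing care. A fallback is to verify directly that $Q$ takes all residues modulo $2^e$ by a lifting induction, using that $\partial Q/\partial y=bx+2cy$ is odd when $x$ is odd.
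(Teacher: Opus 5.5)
Your proof is correct. Its skeleton is the paper's: compute $\epsilon_{Q,p}$ prime by prime, then take $\min_p p^{\epsilon_{Q,p}}$. Your odd-prime arguments are exactly those of the paper. For $\legendre{\Delta}{p}=1$ you lift $\sqrt{\Delta}$ by Hensel and factor $X^2-d^2Y^2$, as in \cref{lem2}(4). For $\legendre{\Delta}{p}=-1$ you show zeros modulo $p$ are trivial, so $Q\not\equiv p\pmod{p^2}$, as in \cref{lem2}(3).

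The genuine difference is at $p=2$. The paper proves \cref{lem2}(2) by a digit-by-digit lift starting from non-trivial solutions modulo $2$, which is essentially your stated fallback. You instead apply Hensel's lemma to the one-variable polynomial $aX^2+bX+c$. Its reduction modulo $2$ has the simple root $X=0$, because $c$ is even and $b$ is odd. This gives the explicit splitting $Q=(x-ry)\bigl(ax+(b+ar)y\bigr)$ over $\Z_2$, whose determinant $b+2ar$ is odd, so $Q$ is $\Z_2$-equivalent to $xy$.

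What your route buys is a uniform statement: at every prime where $\Delta$ is a nonzero $p$-adic square (including $p=2$ when $\Delta\equiv 1\pmod 8$), $Q$ is hyperbolic and hence universal modulo every $p^e$. The paper's route avoids $p$-adic language and stays entirely within elementary congruences.

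Two small remarks. First, in the case $p\mid a$ and $p\mid c$ with $\legendre{\Delta}{p}=1$, the congruence $Q\equiv bxy\pmod p$ does not by itself split $Q$ modulo $p^e$. You should say explicitly that you use either the same Hensel step (the root $X=0$ is simple since $p\nmid b$) or the substitution $(x,y)\mapsto(x,x+y)$, as the paper does, to make the leading coefficient a unit. Second, your closing claim that $q_{\min}$ exists rests on the classical fact that a non-square integer is a quadratic non-residue modulo some odd prime. That claim is not needed for the identity as stated, with $\min\varnothing=\infty$, and the paper does not address it either.
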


This proposition follows from the next lemma.
We call a solution $(x,y)$ of a congruence \emph{non-trivial modulo $m$} if $(x,y) \not\equiv (0,0) \pmod m$.

\begin{lem}\label{lem2}
Let $q$ be an odd prime and let $k$ be a positive integer.
\begin{enumerate}
\item[(1)] If $\Delta \equiv 1 \pmod 8$, then each of the congruences $Q(x,y) \equiv 0 \pmod 2$ and $Q(x,y) \equiv 1 \pmod 2$ has a non-trivial solution modulo $2$.
\item[(2)] If $\Delta \equiv 1 \pmod 8$, then $Q(x,y)$ has no obstruction modulo $2^k$.
\item[(3)] If $\legendre{\Delta}{q} = -1$, then the congruence $Q(x,y) \equiv 0 \pmod q$ has no non-trivial solution modulo $q$.
Consequently, $Q(x,y)$ has an obstruction modulo $q^2$; for example, $Q \not\equiv q \pmod {q^2}$.
\item[(4)] If $\legendre{\Delta}{q} = 1$, then $Q(x,y)$ has no obstruction modulo $q^k$.
\end{enumerate}
\end{lem}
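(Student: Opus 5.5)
The plan is to prove the four parts in order, using Hensel-type lifting for (2) and (4) and an anisotropy argument for (3).

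\textbf{Part (1).} Since $\Delta \equiv 1 \pmod 8$ and $b$ is odd, we have $b^2 \equiv 1 \pmod 8$. Hence $4ac \equiv 0 \pmod 8$, so $ac$ is even. Reducing $Q$ modulo $2$, at least one of $a,c$ is even; say $a$ is even after interchanging $a$ and $c$. Then $Q(1,0)=a\equiv 0 \pmod 2$, which is a non-trivial zero. For the value $1$: if $c$ is odd, take $(0,1)$; if $c$ is even, take $(1,1)$, since $Q(1,1)=a+b+c\equiv 1 \pmod 2$.

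\textbf{Part (2).} Lift solutions $2$-adically. If a solution $(x_0,y_0)$ of $Q\equiv l \pmod 2$ has non-vanishing gradient modulo $2$, Hensel's lemma gives a solution modulo every $2^k$. The partial derivatives are $\partial_x Q = 2ax+by \equiv y$ and $\partial_y Q = bx+2cy\equiv x \pmod 2$, because $b$ is odd. So the gradient is non-zero modulo $2$ exactly when $(x_0,y_0)\not\equiv(0,0)$. Part (1) supplies such non-trivial solutions for both residues modulo $2$. Thus every odd $l$ and every even $l$ lifts, giving every residue modulo $2^k$. For the lift I would use the standard step: if $Q(x,y)\equiv l \pmod{2^j}$ with $y$ odd, replace $x$ by $x+2^j t$. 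This changes $Q$ by $2^j t(2ax+by)+2^{2j}at^2 \equiv 2^j t \pmod{2^{j+1}}$ for $j\ge1$, so a suitable $t$ fixes the next bit. This handling of the characteristic-$2$ Hensel step is the only delicate point, and it works because $b$ is odd.

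\textbf{Part (3).} Suppose $Q(x,y)\equiv 0 \pmod q$ with $(x,y)\not\equiv(0,0)$. If $q\nmid a$, then $4aQ\equiv (2ax+by)^2-\Delta y^2$. If $y\not\equiv 0$, this makes $\Delta$ a square modulo $q$, a contradiction. If $y\equiv 0$, then $2ax\equiv 0$, so $x\equiv 0$, again a contradiction. If $q\mid a$, then $\Delta\equiv b^2$ is a square, which is also excluded because $\legendre{\Delta}{q}=-1$ (and $q\nmid\Delta$). For the consequence, suppose $Q(x,y)\equiv q \pmod{q^2}$. Then $Q\equiv 0\pmod q$ forces $x\equiv y\equiv 0 \pmod q$. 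But then $Q(x,y)\equiv 0\pmod{q^2}$, which contradicts $Q(x,y)\equiv q \pmod{q^2}$.

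\textbf{Part (4).} Here $q\nmid\Delta$. By \cref{lem1}(3), every residue $l$ modulo $q$ is represented. I want a representative with non-zero gradient modulo $q$. The gradient $(2ax+by,\,bx+2cy)$ has matrix of determinant $4ac-b^2=-\Delta\not\equiv 0$, so it vanishes only at $(0,0)$. For $l\not\equiv 0$, any solution is non-trivial. For $l\equiv 0$, $\legendre{\Delta}{q}=1$ makes $Q$ factor into distinct linear forms modulo $q$. So $Q$ is isotropic and has a non-trivial zero (if $q\mid a$, take $(1,0)$). Standard Hensel lifting for odd $q$ then yields every residue modulo $q^k$.

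The main obstacle is the $2$-adic lifting in (2). I would present it explicitly as above rather than cite Hensel's lemma, since the usual simple-root criterion must be checked carefully modulo $2$.
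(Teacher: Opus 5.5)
Your proof is correct, and for parts (1)--(3) it is essentially the paper's argument. You use the same mod-$2$ case check and the same bit-by-bit $2$-adic lift that rests on $b$ being odd. You perturb only the coordinate whose partial derivative is odd, while the paper perturbs both by $2^k(u,v)$; this is the same thing. Part (3) uses the same completing-the-square anisotropy argument; you dispose of $q\mid a$ directly via $\Delta\equiv b^2\pmod q$, where the paper instead changes variables to arrange $q\nmid a$ and $y\not\equiv 0$.

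The genuine difference is in (4). The paper applies Hensel's lemma only to obtain $z_k$ with $z_k^2\equiv\Delta\pmod{q^k}$. It then normalizes to $q\nmid a$ and writes $4aQ\equiv(X+z_kY)(X-z_kY)\pmod{q^k}$. Every $m$ is represented at once by solving $X+z_kY\equiv 1$ and $X-z_kY\equiv m$, so no points are lifted.

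You instead lift non-singular solutions. Your key observation is that the gradient $(2ax+by,\,bx+2cy)$ comes from a matrix of determinant $-\Delta$, so every non-trivial point is non-singular modulo $q$. You use $\legendre{\Delta}{q}=1$ only to supply a non-trivial zero for the class $0$.

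The paper's version is explicit and needs no induction. Yours handles (2), (4) and the later ternary \cref{lem:p-adic-lift} by a single mechanism, and it gives slightly more. Combined with \cref{lem1}(3), it shows that whenever $q\nmid\Delta$, every residue class modulo $q^k$ prime to $q$ is represented. So in the case $\legendre{\Delta}{q}=-1$ of part (3), the obstruction is confined to multiples of $q$.
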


These results are special cases of classical results in the theory of quadratic forms (see~\cite{coxprime,cassels}). We provide an elementary proof
for the case of binary quadratic forms.

\begin{proof}[Proof of \cref{lem2}]
Throughout the proof, $u,v$ denote integers.

\begin{enumerate}
    \item[(1)] Since $b$ is odd and $b^2 - 4ac \equiv 1 \pmod{8}$, at least one of $a$ and $c$ is even.
    Hence $Q(x,y)$ is equivalent modulo $2$ to one of $[1,1,0]$, $[0,1,1]$, or $[0,1,0]$.
    Taking $(x,y) = (0,1)$, $(1,0)$, or $(0,1)$ respectively, we obtain a non-trivial solution of
    $Q \equiv 0 \pmod{2}$. 
    If $Q \equiv 1 \pmod 2$, taking $(x,y) = (1,0)$, $(0,1)$, or $(1,1)$ respectively, we obtain a non-trivial solution of
    $Q \equiv 1 \pmod{2}$.

    \item[(2)] We prove by induction that every residue class modulo $2^k$ has a non-trivial representation. The base case is (1). Let $N$ be a residue class modulo $2^{k+1}$, and choose a non-trivial solution $(x_k,y_k)$ of
    $Q(x_k,y_k)\equiv N\pmod{2^k}$. Set
    $(x_{k+1}, y_{k+1}) \coloneqq (x_k + 2^k u,\, y_k + 2^k v)$.
    A direct computation gives
    \[
        Q(x_{k+1}, y_{k+1}) \equiv Q(x_k,y_k) + 2^k(bx_k + 2cy_k)v + 2^k(2ax_k + by_k)u \pmod{2^{k+1}}.
    \]
    Since $b$ is odd and $(x_k, y_k)$ is non-trivial modulo $2$, one can choose $u, v \in \mathbb{Z}$ so that
    $bx_k v + by_k u$ attains either parity. This allows the last binary digit to be chosen so that $Q(x_{k+1},y_{k+1})\equiv N\pmod{2^{k+1}}$. Moreover, $(x_{k+1},y_{k+1}) \equiv (x_k,y_k) \pmod 2$ remains non-trivial, completing the induction.
    \item[(3)] We prove the contrapositive.
    Let $(x,y)$ be a non-trivial solution of $Q \equiv 0 \pmod{q}$.
    If necessary, applying the bijections $(x,y) \mapsto (y,x)$ or $(x,y) \mapsto (x, x+y)$,
    we may assume $q \nmid a$ and $y \not\equiv 0 \pmod{q}$.
    By an argument analogous to that of \cref{lem1}(3-i), the congruence $Q\equiv0\pmod q$ is equivalent to $X^2 - \Delta Y^2\equiv0\pmod q$.
    Since $Y \not\equiv 0 \pmod{q}$, we have $\left(\tfrac{X}{Y}\right)^2 \equiv \Delta \pmod{q}$,
    which implies $\legendre{\Delta}{q} \in \{0, 1\}$. This establishes the contrapositive.
    Under the hypothesis $\legendre{\Delta}{q}=-1$, it follows that $Q(x,y)\equiv0\pmod q$ only when $q\mid x,y$, and hence $Q(x,y)\equiv0\pmod{q^2}$.
    Thus, $Q \not\equiv q \pmod{q^2}$.

    \item[(4)] Since $\legendre{\Delta}{q} = 1$, Hensel's lemma shows that $\Delta$ is also a square modulo $q^k$;
    that is, there exists $z_k \in \mathbb{Z}/q^k\mathbb{Z}$ satisfying ${z_k}^2 \equiv \Delta \pmod{q^k}$.
    Note that $z_k \not\equiv 0 \pmod{q}$.
    Arranging $q \nmid a$ as in (3) and completing the square as in the proof of \cref{lem1}(3-i), we have
    \[
        4aQ \equiv X^2 - \Delta Y^2 \equiv X^2 - {z_k}^2 Y^2 \equiv (X + z_k Y)(X - z_k Y) \pmod{q^k}.
    \]
    Setting
    \[
        X \equiv \frac{1 + m}{2} \pmod{q^k}
        \quad \text{and} \quad
        Y \equiv \frac{1 - m}{2z_k} \pmod{q^k},
    \]
    where $m$ is any residue class modulo $q^k$, we obtain $(X + z_k Y)(X - z_k Y) \equiv m \pmod{q^k}$.
    Thus, $4aQ$ represents every residue class modulo $q^k$. Since multiplication by $4a$ permutes the residue classes modulo $q^k$, it follows that $Q$ has no obstruction modulo $q^k$.
\end{enumerate}
\end{proof}

\begin{proof}[Proof of \cref{prop:min-obs-odd}]
By the Chinese remainder theorem and \cref{lem2}(2), $\kappa_Q$ is a power of an odd prime $q$.
Moreover, by \cref{lem2}(4), this $q$ satisfies $\legendre{\Delta}{q} \in \{0,-1\}$.
If $\legendre{\Delta}{q} = 0$, that is, if $q \mid \Delta$, then $Q$ has an obstruction already modulo $q$, and the smallest such modulus is $p_{\min}$.
If $\legendre{\Delta}{q} =-1$, then \cref{lem1}(3) and \cref{lem2}(3) show that the first obstruction occurs modulo $q^2$; the smallest such modulus is $q_{\min}^2$.
Therefore, $\kappa_Q = \min \{ p_{\min} , q_{\min}^2 \}$.
\end{proof}


\begin{ex} Using \cref{main-thm1}, we obtain the following examples.
\begin{enumerate}
\item[(1)] $Q=x^2+2y^2, \Delta = -8 \Rightarrow \kappa_Q=8, Q \not\equiv 5,7 \pmod 8$.
\item[(2)] $Q=x^2+4y^2, \Delta = -16 \Rightarrow \kappa_Q=4, Q \not\equiv 2,3 \pmod 4$.
\item[(3)] $Q=x^2+2xy+4y^2, \Delta = -12 \Rightarrow \kappa_Q=3, Q \not\equiv 2 \pmod 3$.
\item[(4)] $Q=2x^2+4xy+7y^2, \Delta = -40 \Rightarrow \kappa_Q=5, Q \not\equiv 1,4 \pmod 5$.
\item[(5)] $Q=x^2+xy+10y^2, \Delta = -39 \Rightarrow \kappa_Q=3, Q \not\equiv 2 \pmod 3$.
\item[(6)] $Q=x^2+3xy+13y^2, \Delta = -43 \Rightarrow \kappa_Q=4, Q \not\equiv 2 \pmod 4$.
\item[(7)] $Q=x^2+3xy+14y^2, \Delta = -47, p_{\min}=47, q_{\min}=5  \Rightarrow \kappa_Q=25, Q \not\equiv 5,10,15,20 \pmod {25}$.
\end{enumerate}
\end{ex}

\section{Local obstruction exponents for ternary diagonal forms}
In this section, let $Q(x,y,z) = ax^2+by^2+cz^2$ be a primitive positive definite integral ternary diagonal form.
Using the results above, we can completely describe $\epsilon_{Q,p}$ for such forms when $p$ is odd.
Since $Q$ is diagonal, by permuting $x, y, z$ if necessary,
we may assume that $p \mid a$ when $p$ divides exactly one of $a, b, c$, and that $p \mid a, b$ when $p$ divides exactly two of $a, b, c$.
We henceforth assume that $Q$ is of this shape.

\begin{prop}[$\epsilon_{Q,p}$ for odd primes]\label{prop:ternary-odd-primes} Let $Q(x,y,z) = ax^2+by^2+cz^2$ be a primitive positive definite integral ternary diagonal form. Then for odd primes
$p$, we have
\[
   \epsilon_{Q,p} =
   \begin{cases}
      1      & \text{if } p \text{ divides exactly two of the integers } a,b,c, \\[2pt]
      2      & \text{if } p \text{ divides exactly one of the integers } a,b,c \text{ and } \legendre{-bc}{p} = -1, \\[2pt]
      \infty & \text{if } p \nmid abc \text{, or } p \text{ divides exactly one of the integers } a,b,c \text{ and } \legendre{-bc}{p} = 1.
   \end{cases}
\]
\end{prop}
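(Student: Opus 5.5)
I split into the three cases according to how many of $a,b,c$ are divisible by the odd prime $p$; primitivity rules out $p$ dividing all three. The approach is to reduce each case to a binary or unary form and then invoke \cref{lem1} and \cref{lem2}.

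\emph{Case $p$ divides exactly two coefficients} ($p\mid a,b$, $p\nmid c$). Modulo $p$ we have $Q\equiv cz^2$. This takes only $\frac{p+1}{2}<p$ values, so $Q$ is obstructed modulo $p$ and $\epsilon_{Q,p}=1$.

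\emph{Case $p\nmid abc$.} The binary form $by^2+cz^2$ has discriminant $-4bc$, which is prime to $p$. I plan to show that already the binary subform represents every class modulo $p^e$, for all $e$. Modulo $p$ this is \cref{lem1}(3), or the pigeonhole argument of (3-i). To lift, I use Hensel-type lifting with a representation in which one of $y,z$ is a unit. At a given $N$, either $\legendre{-bc}{p}=1$, and \cref{lem2}(4) applies directly to the binary form; or I use the third variable. Concretely, for a target $N$ choose $x$ so that $N-ax^2\not\equiv0\pmod p$; this is possible since $ax^2$ takes at least two values modulo $p$. Then $by^2+cz^2\equiv N-ax^2\pmod p$ has a solution with $(y,z)\not\equiv(0,0)$. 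Since $p$ is odd and $p\nmid bc$, the gradient $(2by,2cz)$ is nonzero modulo $p$. Standard Hensel lifting, as in the proof of \cref{lem2}(2), then gives a solution modulo every $p^e$. Hence $\epsilon_{Q,p}=\infty$.

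\emph{Case $p$ divides exactly one coefficient} ($p\mid a$). Put $B(y,z)=by^2+cz^2$, with discriminant $\Delta=-4bc$; note $\legendre{\Delta}{p}=\legendre{-bc}{p}$. Modulo $p$, $Q\equiv B$, which by \cref{lem1}(3) has no obstruction modulo $p$, so $\epsilon_{Q,p}\ge2$. If $\legendre{-bc}{p}=1$, \cref{lem2}(4) applies to $B$: its proof only used completing the square with a unit leading coefficient and $\Delta$ being a nonzero square modulo $p$, so it is valid for $\Delta\equiv0\pmod 4$ as well. Thus $B$, and hence $Q$ (take $x=0$), has no obstruction modulo any $p^k$, and $\epsilon_{Q,p}=\infty$. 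If $\legendre{-bc}{p}=-1$, I show $Q\not\equiv p\pmod{p^2}$. If $Q(x,y,z)\equiv p\pmod{p^2}$, then $B(y,z)\equiv0\pmod p$. By \cref{lem2}(3), whose proof again does not use the parity of $\Delta$, this forces $p\mid y,z$. Then $B\equiv0\pmod{p^2}$, so $ax^2\equiv p\pmod{p^2}$. Writing $a=pa'$ gives $a'x^2\equiv1\pmod p$. This is solvable when $\legendre{a'}{p}=1$, so this residue argument breaks down and the case needs more care.

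The main obstacle is exactly this last step: choosing the right residue modulo $p^2$. I would instead use the target $N=p\,t$ with $t$ chosen so that $a'x^2\equiv t\pmod p$ is unsolvable. If $p^2\mid a$, take $t=1$: then $ax^2\equiv0\pmod{p^2}$, so $N=p$ is never hit. If $p\parallel a$, take $t$ to be a nonresidue times $a'$ (with $t\not\equiv0$); then $a'x^2\equiv t$ has no solution. In either case the argument above shows $Q\not\equiv pt\pmod{p^2}$, so $\epsilon_{Q,p}=2$. Finally, I will check that the cases are exhaustive.
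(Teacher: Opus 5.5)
Your proof is correct and follows essentially the same route as the paper: the same three-way case split, the same reduction to \cref{lem1}(3) and \cref{lem2}(3),(4) plus Hensel lifting from a nonsingular solution modulo $p$. In the case $\legendre{-bc}{p}=-1$, your explicit missed residue $pt$ is a concrete version of the paper's count $\#\{pa'x^2 \bmod p^2\}<p$, and you are right that $t=1$ alone can fail when $p\parallel a$ and $a'$ is a residue.

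One small fix: in the case $p\nmid abc$, delete the opening claim that the binary subform $by^2+cz^2$ alone represents every class modulo $p^e$. That claim fails modulo $p^2$ when $\legendre{-bc}{p}=-1$. The argument you actually give, which chooses $x$ so that $N-ax^2$ is a unit modulo $p$ and then lifts in $(y,z)$, never uses it.
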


\begin{lem}\label{lem:p-adic-lift}
Let $Q(x,y,z) = ax^2 + by^2 + cz^2$, let $p$ be an odd prime, and suppose $p \nmid abc$.
If $Q(x_1,y_1,z_1) \equiv n_1 \pmod{p}$ and $(x_1,y_1,z_1)$ is non-trivial modulo $p$,
then for every $k \geq 2$, every integer $n$ with $n \equiv n_1 \pmod p$ is represented by $Q$ modulo $p^k$.
\end{lem}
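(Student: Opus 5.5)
The plan is a Hensel-type induction on $k$, in the same spirit as the lifting argument of \cref{lem2}(2). I will prove the slightly stronger statement that for every $k \geq 1$ and every integer $n \equiv n_1 \pmod p$ there is a triple $(x_k,y_k,z_k)$ with
\[
Q(x_k,y_k,z_k) \equiv n \pmod{p^k}, \qquad (x_k,y_k,z_k) \equiv (x_1,y_1,z_1) \pmod p .
\]
The second condition keeps the triple non-trivial modulo $p$ at every stage. For the base case $k=1$ we take the given triple, since $n \equiv n_1 \pmod p$.

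For the inductive step, fix $n$ with $n\equiv n_1 \pmod p$ and suppose the statement holds for $k$. Apply it to $n$ itself to get $(x_k,y_k,z_k)$ with $Q(x_k,y_k,z_k) = n + p^k t$ for some integer $t$. Then look for a lift
\[
(x_{k+1},y_{k+1},z_{k+1}) = (x_k + p^k u,\ y_k + p^k v,\ z_k + p^k w).
\]
Expanding, and using $2k \geq k+1$, gives
\[
Q(x_{k+1},y_{k+1},z_{k+1}) \equiv n + p^k\bigl(t + 2a x_k u + 2b y_k v + 2c z_k w\bigr) \pmod{p^{k+1}}.
\]
So we need to solve the linear congruence $2a x_k u + 2b y_k v + 2c z_k w \equiv -t \pmod p$.

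This congruence is solvable because $p$ is odd and $p \nmid abc$, so $2a, 2b, 2c$ are units modulo $p$. Since $(x_k,y_k,z_k) \equiv (x_1,y_1,z_1)$ is non-trivial modulo $p$, at least one coordinate, say $x_k$, is a unit modulo $p$. We set $v=w=0$ and $u \equiv -t(2ax_k)^{-1} \pmod p$. The new triple is congruent to $(x_1,y_1,z_1)$ modulo $p$, which closes the induction and proves the lemma for every $k\ge2$.

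The only point needing care is the gradient step: the partial derivative $2a x_k$ (or $2by_k$, or $2cz_k$) must be a unit modulo $p$. This is exactly where the three hypotheses are used: $p$ odd, $p\nmid abc$, and non-triviality of the starting solution. The induction also has to carry non-triviality modulo $p$ forward, which the invariant $(x_k,y_k,z_k)\equiv(x_1,y_1,z_1) \pmod p$ does. Apart from this, the argument is a routine computation.
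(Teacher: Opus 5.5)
Your proof is correct and matches the paper's argument: a Hensel-type induction that lifts by $p^{k}(u,v,w)$, where $p$ odd, $p\nmid abc$ and non-triviality modulo $p$ make one of $2ax_k, 2by_k, 2cz_k$ a unit, and non-triviality persists because each lift is congruent to the starting triple modulo $p$. Your write-up is a bit more explicit (you solve for $u$ directly and state the invariant), but the route is the same.
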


The following lifting arguments are similar to the multivariable polynomial version of Hensel's lemma~\cite{hensel-multivariable}. Because the two $2$-adic arguments below begin at different powers of $2$, we give direct proofs.

\begin{proof}[Proof of \cref{lem:p-adic-lift}]
Fix an integer $n\equiv n_1\pmod p$. We proceed by induction on $k$.

Suppose $Q(x_{k-1},y_{k-1},z_{k-1}) \equiv n \pmod{p^{k-1}}$ and that $(x_{k-1},y_{k-1},z_{k-1})$ is
non-trivial modulo $p$.

Define
\[
x_k = x_{k-1} + p^{k-1}u, \qquad y_k = y_{k-1} + p^{k-1}v, \qquad z_k = z_{k-1} + p^{k-1}w.
\]
Then
\[
Q(x_k,y_k,z_k) \equiv Q(x_{k-1},y_{k-1},z_{k-1}) + p^{k-1}\bigl(2ax_{k-1}u + 2by_{k-1}v + 2cz_{k-1}w\bigr) + p^{2k-2}(\dots) \pmod{p^k}.
\]
Since $2k-2 \geq k$, this simplifies to
\[
Q(x_k,y_k,z_k) \equiv Q(x_{k-1},y_{k-1},z_{k-1}) + p^{k-1}\bigl(2ax_{k-1}u + 2by_{k-1}v + 2cz_{k-1}w\bigr) \pmod{p^k}.
\]
Because $p\nmid abc$ and the solution is non-trivial modulo $p$, at least one coefficient of $u,v,w$ below is nonzero modulo $p$. We can therefore arrange
\[
2ax_{k-1}u + 2by_{k-1}v + 2cz_{k-1}w \equiv 0 ,1 , \dots , p-1 \pmod{p},
\]
and hence choose the next base-$p$ digit so that $Q(x_k,y_k,z_k)\equiv n\pmod{p^k}$. The new solution is congruent to the old one modulo $p$, so it remains non-trivial. This completes the induction.
\end{proof}

\begin{proof}[Proof of \cref{prop:ternary-odd-primes}]
\begin{enumerate}
\item[(1)]
Suppose $p \nmid abc$. It suffices to exhibit, for every residue $n$ modulo $p$, a non-trivial solution of $Q(x,y,z) \equiv n \pmod{p}$; then \cref{lem:p-adic-lift} shows that $Q$ has no obstruction modulo $p^k$ for any $k$.
If $n \not\equiv 0 \pmod p$, setting $z = 0$ reduces the congruence to $ax^2 + by^2 \equiv n \pmod{p}$,
which has a solution by the proof of \cref{lem1}(3); since $n \not\equiv 0$, this solution is non-trivial modulo $p$.
If $n \equiv 0 \pmod p$, setting $z \equiv 1 \pmod{p}$ reduces the congruence to
\[
  ax^2 + by^2 \equiv -c \pmod{p}.
\]
Again by the proof of \cref{lem1}(3), there exists a solution $(x_1, y_1)$, and
$(x_1, y_1, 1)$ is a non-trivial solution of $Q(x,y,z) \equiv 0 \pmod{p}$.

\item[(2)]
Suppose $p$ divides exactly two of the integers $a, b, c$, so that
$Q(x,y,z) \equiv cz^2 \pmod{p}$.
Since $\frac{p+1}{2} < p$, it follows that $Q(x,y,z)$ has an obstruction modulo $p$.

\item[(3)]
Suppose $p$ divides exactly one of the integers $a, b, c$, so that
$Q(x,y,z) \equiv by^2 + cz^2 \pmod{p}$.

\begin{enumerate}

\item[(3-i)]
If $\left(\frac{-bc}{p}\right) = 1$,
then by an argument analogous to the proof of \cref{lem2}(4),
$Q(x,y,z)$ has no obstruction modulo $p^k$.
  \item[(3-ii)]
  If $\left(\frac{-bc}{p}\right) = -1$, then, since $p \nmid bc$ and $Q(x,y,z) \equiv by^2 + cz^2 \pmod{p}$, the pigeonhole principle shows that $Q(x,y,z)$ has no obstruction modulo $p$.
  So we consider $Q(x,y,z) \pmod {p^2}$. By an argument analogous to the proof of \cref{lem2}(3),
  every solution of $Q \equiv 0 \pmod{p}$ must have $(y, z)$ trivial modulo $p$.
  Writing $a = pa'$, $y = py'$, and $z = pz'$, we obtain
  \[
    Q(x,y,z) = pa'x^2 + bp^2y'^2 + cp^2z'^2 \equiv pa'x^2 \pmod{p^2}.
  \]
  In either case, whether $p \mid a'$ or not, since $\frac{p+1}{2} < p$, we have
  $\#\!\left\{\, pa'x^2 \pmod{p^2} \,\right\} < p$,
  so some residue class divisible by $p$ is not represented modulo $p^2$,
  and therefore $Q(x,y,z)$ has an obstruction modulo $p^2$.
  This completes the proof of the proposition.
\end{enumerate}
\end{enumerate}
\end{proof}

For $\epsilon_{Q,2}$, we prove the claim by dividing into three cases according to the number of even coefficients:
(i) all of $a, b, c$ are odd; (ii) exactly two of $a, b, c$ are even; and (iii) exactly one of $a, b, c$ is even.
Note that $\epsilon_{Q,2} \geq 2$ because $Q(x,y,z)$ is primitive.
\subsection{(i) All \texorpdfstring{$a,b,c$}{a, b, c} are odd}

\begin{prop}\label{prop:ternary-all-odd}
Let $Q(x,y,z)=ax^2+by^2+cz^2$ with $a,b,c$ odd. Then
\[
   \epsilon_{Q,2} =
   \begin{cases}
      3      & \text{if } a \equiv b \equiv c \pmod 4, \\
      \infty      & \text{(otherwise)}.
   \end{cases}
\]
\end{prop}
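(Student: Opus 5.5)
The plan has three parts: rule out obstructions modulo $4$ for every odd $a,b,c$; compute the value set modulo $8$ directly; and prove a $2$-adic lifting lemma in the spirit of \cref{lem:p-adic-lift} and \cref{lem2}(2) that starts at modulus $8$.

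\textbf{Step 1: no obstruction modulo $4$.} Odd squares are $\equiv 1 \pmod 8$, and even squares are $\equiv 0$ or $4 \pmod 8$. Hence, writing $S\subseteq\{a,b,c\}$ for the set of coefficients attached to odd variables, every value of $Q$ modulo $4$ is $\sum_{S} \pm$. Taking $S$ to be each single coefficient gives the residues $a \bmod 4$ and $c \bmod 4$ (both odd). Taking two coefficients gives the residues $0$ and $2$, since the sum of two odd numbers is even and $a+b$ or $a+b+c$-type sums can be checked directly. For instance, $2^2 a \equiv 0$ and $a+b \equiv 2$ or $0$. The explicit check is: $Q(0,0,0)=0$; $a+b$ and $a+b+4c$ cover $2$ when needed; and the single coefficients $a,b,c$ give $1$ or $3$. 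If $a\equiv b\equiv c\equiv 1 \pmod 4$, the residue $3 \equiv a+b+c$ is covered, and $2 \equiv a+b$ is covered. The other sign patterns are handled symmetrically. So $\epsilon_{Q,2}\ge 3$ always.

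\textbf{Step 2: the value set modulo $8$.} Each $x^2 \bmod 8$ lies in $\{0,1,4\}$. Therefore $Q \bmod 8$ ranges over $\sum \varepsilon_a a$ with $\varepsilon\in\{0,1,4\}$, and I would list this value set directly.

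If $a\equiv b\equiv c\pmod 4$, then modulo $8$ each coefficient lies in $\{u,u+4\}$ for the common residue $u$. Every sum of $t$ odd coefficients plus multiples of $4$ is $\equiv tu \pmod 4$. The residue class $3u \pmod 8$ is then reached only from $t=3$, giving values $a+b+c$; together with $t=1$ giving $u$ and $u+4$. I expect the missing class to be $7u \pmod 8$, i.e.\ the classic $x^2+y^2+z^2\not\equiv 7$. Checking this carefully is short but needs care: $t=3$ yields only $a+b+c \equiv 3u$ or $3u+4$, and $t=1$ yields only $u+\{0,4\}$. Both of these must miss $-u$ modulo $8$, which follows since $-u \equiv 3u \pmod 8$ iff $4u\equiv 0$, false for odd $u$. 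Hmm — actually $-u\equiv 3u+4 \pmod 8$ iff $4u\equiv 4$, which is true. So the missing class should instead be determined by the precise residues mod $8$, and I will simply tabulate up to the symmetry $u\mapsto -u$. For odd $t$, the reachable residues are $\{u,u+4\}$ from $t=1$ (plus multiples of $4$ from even-square variables), and $a+b+c$ plus $4\cdot(\text{count of }4\text{'s})$ from $t=3$. With two $4$'s only available when $t=1$, these sets are $\{u,u+4\}\cup\{3u+4j\}$, covering all four odd classes. So the obstruction must be in the even classes. For $t=2$ the values are $\equiv 2u \pmod 4$, and $t=0$ gives $\{0,4\}$. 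A careful tabulation over the cases $(a,b,c)\bmod 8$ will locate an unrepresented class; this tabulation is the main obstacle. I would organize it by the multiset of residues mod $8$ (there are few, up to multiplying by $-1$ and permuting).

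\textbf{Step 3: lifting when not all coefficients are congruent modulo $4$.} Here I would prove that every residue class modulo $8$ is hit with some odd coordinate, by explicit tabulation. Then I would lift as follows. If $Q(x,y,z)\equiv n\pmod{2^k}$ with $k\ge 3$ and $x$ odd, replace $x$ by $x+2^{k-1}u$. This changes $Q$ by $2^k a x u + 2^{2k-2}a u^2 \equiv 2^k u \pmod{2^{k+1}}$, since $2k-2\ge k+1$ for $k\ge3$. So the next binary digit can be fixed, and $x$ stays odd. Induction gives no obstruction modulo any $2^k$, hence $\epsilon_{Q,2}=\infty$. The same lifting shows, in the congruent case, that the obstruction first appears at $8$, matching the claimed value $3$.
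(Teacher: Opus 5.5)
Your Steps 1 and 3 are fine and follow the paper's route. Step 1 is loosely argued, but its claim holds: two of the three odd coefficients agree modulo $4$, so some pair sums to $2$, and the single coefficients together with $a+b+c$ cover both odd classes. Step 3 matches the paper's lifting from modulus $8$ via \cref{lem:8-adic-lift}; its hypothesis that one of $ax,by,cz$ is odd is exactly your ``some odd coordinate'' condition when $a,b,c$ are odd. The paper seeds that lemma with the points $(1,0,0),(1,0,2),(0,1,0),(0,1,2),(1,1,0),(1,1,2),(0,1,1),(2,1,1)$, after relabeling so that $a\not\equiv b$ and $b+c\equiv 2\pmod 4$.

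The genuine gap is Step 2. You never prove that $Q$ has an obstruction modulo $8$ when $a\equiv b\equiv c\pmod 4$, and the reasoning you give goes wrong at the decisive point. When all three variables are odd there is no even variable left, so the value is exactly $a+b+c\pmod 8$. The ``$+4j$'' freedom you allow for $t=3$ does not exist. Hence the odd values modulo $8$ are only $\{u,\,u+4,\,a+b+c\}$, three classes, and the class $a+b+c+4$ is never hit. Your observation $-u\equiv 3u+4\pmod 8$ was pointing exactly here: the missed class is $-u$ if $a+b+c\equiv 3u$, and $3u$ if $a+b+c\equiv 3u+4$. For instance, $x^2+y^2+5z^2\not\equiv 3\pmod 8$.

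Because of that miscount you concluded that ``the obstruction must be in the even classes,'' so your promised tabulation would search in the wrong place. In the congruent case the even classes modulo $8$ are always all represented: $Q(0,0,0)=0$, $Q(2,0,0)\equiv 4$, $Q(1,1,0)=a+b\equiv 2\pmod 4$, and $Q(1,1,2)\equiv a+b+4\pmod 8$. So as written, the inequality $\epsilon_{Q,2}\le 3$ in the congruent case is unproved. The fix is the three-class count above, which is precisely the paper's argument.

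Your closing remark in Step 3 is also misattributed. Lifting plays no role in showing that the obstruction first appears at $8$; that comes from Step 1 (no obstruction modulo $2$ or $4$) together with the corrected Step 2.
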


\begin{lem}\label{lem:8-adic-lift}
Let $Q(x,y,z) = ax^2 + by^2 + cz^2$ be an integral diagonal form.
Suppose that $Q(x_3,y_3,z_3) \equiv n_3 \pmod{8}$ and at least one of
$ax_3$, $by_3$, and $cz_3$ is odd. Then, for every $k \geq 4$, every integer $n$ with $n \equiv n_3 \pmod 8$ is represented by $Q$ modulo $2^k$.
\end{lem}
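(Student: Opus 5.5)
We argue by induction on $k$, in the same way as in \cref{lem:p-adic-lift}, but the step size is $2^{k-2}$ rather than $2^{k-1}$, because derivatives of squares carry an extra factor of $2$. Fix $n\equiv n_3\pmod 8$. The induction hypothesis at level $k-1$ (for $k\ge 4$) is that there is $(x_{k-1},y_{k-1},z_{k-1})$ with
\[
Q(x_{k-1},y_{k-1},z_{k-1})\equiv n \pmod{2^{k-1}}
\]
and $(x_{k-1},y_{k-1},z_{k-1})\equiv(x_3,y_3,z_3)\pmod 2$. The base case $k-1=3$ is the given solution.

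For the inductive step, set
\[
x_k=x_{k-1}+2^{k-2}u,\quad y_k=y_{k-1}+2^{k-2}v,\quad z_k=z_{k-1}+2^{k-2}w
\]
with $u,v,w\in\{0,1\}$. Expanding gives
\[
Q(x_k,y_k,z_k)=Q(x_{k-1},y_{k-1},z_{k-1})+2^{k-1}(ax_{k-1}u+by_{k-1}v+cz_{k-1}w)+2^{2k-4}(au^2+bv^2+cw^2).
\]
Since $k\ge4$, we have $2k-4\ge k$, so the last term vanishes modulo $2^k$. This is exactly where the hypothesis $k\ge 4$ (starting from modulus $8$) is needed: for $k=3$ the quadratic term would survive.

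Reducing the bracket modulo $2$, the parities of $ax_{k-1},by_{k-1},cz_{k-1}$ equal those of $ax_3,by_3,cz_3$, because the coordinates are unchanged modulo $2$ and $a,b,c$ are fixed. By hypothesis at least one of these is odd, say $ax_3$. Taking $v=w=0$ and $u\in\{0,1\}$ then makes the bracket attain either parity.

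Since $Q(x_{k-1},y_{k-1},z_{k-1})\equiv n\pmod{2^{k-1}}$, the difference $n-Q(x_{k-1},y_{k-1},z_{k-1})$ is $0$ or $2^{k-1}$ modulo $2^k$. Choosing $u$ to match this gives $Q(x_k,y_k,z_k)\equiv n\pmod{2^k}$. Moreover $2^{k-2}$ is even for $k\ge 3$, so the new triple is still congruent to $(x_3,y_3,z_3)$ modulo $2$, and the induction continues.

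The only delicate point is confirming that the cross term has exponent $k-1$ and the quadratic error exponent $2k-4\ge k$. There is no real obstacle beyond that bookkeeping.
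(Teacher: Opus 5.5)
Your proposal is correct and follows essentially the same route as the paper. Both run an induction on $k$ with increments $2^{k-2}$, use that the error term $2^{2k-4}(\cdots)$ vanishes modulo $2^k$ once $k\ge 4$, choose the next binary digit using the odd coefficient among $ax_{k-1},by_{k-1},cz_{k-1}$, and note that the parities of the coordinates, and hence the hypothesis, are preserved.
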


\begin{proof}[Proof of \cref{lem:8-adic-lift}]
Fix an integer $n\equiv n_3\pmod8$. We proceed by induction on $k$.

Suppose $Q(x_{k-1},y_{k-1},z_{k-1}) \equiv n \pmod{2^{k-1}}$ and at least one of
$ax_{k-1}$, $by_{k-1}$, and $cz_{k-1}$ is odd.

Define
\[
x_k = x_{k-1} + 2^{k-2}u, \qquad y_k = y_{k-1} + 2^{k-2}v, \qquad z_k = z_{k-1} + 2^{k-2}w.
\]
Then
\[
Q(x_k,y_k,z_k) \equiv Q(x_{k-1},y_{k-1},z_{k-1}) + 2^{k-1}\bigl(ax_{k-1}u + by_{k-1}v + cz_{k-1}w\bigr) + 2^{2k-4}(\dots) \pmod{2^k}.
\]
Since $2k-4 \geq k$, this simplifies to
\[
Q(x_k,y_k,z_k) \equiv Q(x_{k-1},y_{k-1},z_{k-1}) + 2^{k-1}\bigl(ax_{k-1}u + by_{k-1}v + cz_{k-1}w\bigr) \pmod{2^k}.
\]
By choosing $u,v,w$ appropriately, we can arrange
\[
ax_{k-1}u + by_{k-1}v + cz_{k-1}w \equiv 0 \text{ or } 1 \pmod{2},
\]
and hence choose the next binary digit so that $Q(x_k,y_k,z_k)\equiv n\pmod{2^k}$. The parity of each coordinate is unchanged, so the hypothesis persists. This completes the induction.
\end{proof}

\begin{proof}[Proof of \cref{prop:ternary-all-odd}]
\begin{enumerate}
\item[(1)] Suppose $a \equiv b \equiv c \pmod{4}$.
Since $Q \equiv a(x^2+y^2+z^2) \pmod 4$ and $x^2+y^2+z^2$ attains every residue modulo $4$, the form $Q(x,y,z)$ has no obstruction modulo $4$.
We now determine the odd residues represented modulo $8$.
An odd value of $Q$ arises only when exactly one or all three of $x,y,z$ are odd; using $t^2 \equiv 1 \pmod 8$ for odd $t$ and $t^2 \equiv 0, 4 \pmod 8$ for even $t$, the odd values of $Q$ modulo $8$ are
\[
\{\, a,\ a+4,\ b,\ b+4,\ c,\ c+4,\ a+b+c \,\}.
\]
Since $a \equiv b \equiv c \pmod 4$, we have $\{a,a+4\} = \{b,b+4\} = \{c,c+4\}$, while $b + c \equiv 2a \equiv 2 \pmod 4$ gives $a+b+c \equiv a + 2 \pmod 4$.
Hence the odd residues represented modulo $8$ are exactly the three classes $\{a,\ a+4,\ a+b+c\}$, so one odd residue class is missed, and $Q(x,y,z)$ has an obstruction modulo $8$. Therefore $\epsilon_{Q,2} = 3$.

\item[(2)] Otherwise, we may assume $a \not\equiv b \pmod{4}$.
By \cref{lem:8-adic-lift}, it suffices to show that $Q(x,y,z) \equiv n \pmod{8}$ has a non-trivial solution for every $n \pmod{8}$. We now exhibit such solutions
explicitly.

For
\[
(x,y,z) = (1,0,0),\ (1,0,2),\ (0,1,0),\ (0,1,2) \pmod{8},
\]
the corresponding values of $Q(x,y,z)$ modulo $8$ are, respectively,
\[
a,\quad a+4,\quad b,\quad b+4.
\]
Since $a,b$ are odd and $a \not\equiv b \pmod{4}$, we have
\[
\{a,\ a+4,\ b,\ b+4\} = \{1,3,5,7\} \pmod{8}.
\]

On the other hand, by hypothesis $a+b \equiv 0 \pmod{4}$, and at least one of $b+c$, $c+a$ is $\equiv 2 \pmod{4}$. By relabeling the variables if necessary, 
we may assume $b+c \equiv 2 \pmod{4}$.

For
\[
(x,y,z) = (1,1,0),\ (1,1,2),\ (0,1,1),\ (2,1,1) \pmod{8},
\]
the corresponding values of $Q(x,y,z)$ modulo $8$ are, respectively,
\[
a+b,\quad a+b+4,\quad b+c,\quad b+c+4.
\]
By the assumptions above,
\[
\{a+b,\ a+b+4,\ b+c,\ b+c+4\} = \{0,2,4,6\} \pmod{8}.
\]

Therefore, by \cref{lem:8-adic-lift}, $Q(x,y,z)$ has no obstruction modulo $2^k$ for any $k\geq1$.
\end{enumerate}
\end{proof}

\subsection{(ii) Exactly two of \texorpdfstring{$a,b,c$}{a, b, c} are even}
As for $\epsilon_{Q,p}$, we may assume that $a,b$ are even and $c$ is odd.
\begin{prop}\label{prop:ternary-two-even}
Let $Q(x,y,z)=ax^2+by^2+cz^2$ with $a,b$ even and $c$ odd. Then
\[
   \epsilon_{Q,2} =
   \begin{cases}
      2      & \text{if } 4 \mid a,b, \\
      3      & \text{if } a \equiv b \equiv 2 \pmod 4 \text{, or } a \equiv 2 \pmod 4 \text{ and } b \equiv 0 \pmod 8, \\
      4      & \text{if } a \equiv 2 \pmod 4 \text{ and } b \equiv 4 \pmod 8.
   \end{cases}
\]
\end{prop}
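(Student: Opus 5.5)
We compute the value set of $Q$ modulo $2^e$ directly for small $e$, splitting by the residues of $a,b$ modulo $8$. The point is that for $a,b$ even only a few residues of $x^2,y^2,z^2$ matter. Throughout we use $t^2\equiv 1 \pmod 8$ for odd $t$ and $t^2\equiv 0,4\pmod 8$ for even $t$. As noted before \cref{prop:ternary-all-odd}, primitivity gives $\epsilon_{Q,2}\ge 2$. So in each case we must show an obstruction at the claimed level and none at the levels strictly between $2$ and the claimed one.

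\emph{Case $4\mid a,b$.} Here $Q\equiv cz^2\pmod 4$ takes at most the values $0$ and $c$. Since $4$ residues are needed, there is an obstruction modulo $4$, and $\epsilon_{Q,2}=2$.

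\emph{Case $a\equiv 2\pmod 4$.} Modulo $4$ we have $Q\equiv 2x^2+by^2+cz^2$. Taking $y=0$, the values of $2x^2+cz^2$ are $0,2,c,c+2$, which are all residues modulo $4$. So there is no obstruction modulo $4$. Modulo $8$, for $a\equiv 2 \pmod 4$ we have $ax^2\in\{0,a\}$, since $4a\equiv 0\pmod 8$. The odd values of $Q$ arise only for $z$ odd, and they form the set $c+\{ax^2+by^2\}$.
\begin{itemize}
\item[$\bullet$] If $b\equiv 0\pmod 8$, this set is $\{c,c+a\}$, only two of the four odd classes. We get an obstruction modulo $8$.
\item[$\bullet$] If $b\equiv 2\pmod 4$, it is $c+\{0,a,b,a+b\}$. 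Either $a\equiv b\pmod 8$, giving at most $3$ classes, or $a+b\equiv 0\pmod 8$, giving $\{c,c+a,c+b\}$. Both give an obstruction modulo $8$.
\end{itemize}
This settles $\epsilon_{Q,2}=3$ in these two subcases.

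\emph{Case $a\equiv 2\pmod 4$, $b\equiv 4\pmod 8$.} First we check that there is no obstruction modulo $8$. For $z$ odd, the odd values are $c+\{0,a,4,a+4\}$, which are all odd classes. For $z$ even we have $cz^2\in\{0,4\}$, so the even values contain $\{0,a,4,a+4\}$, which are all even classes. Next, modulo $16$ we look at the classes $\equiv 2\pmod 4$. These force $x$ odd, and then $ax^2\equiv a\pmod{16}$ because $8a\equiv0\pmod{16}$. They also force $z$ even, so $cz^2\in\{0,4c\}\pmod{16}$, and $by^2\in\{0,b\}\pmod{16}$. Hence these values lie in $a+\{0,b,4c,b+4c\}$. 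Here $b$ and $4c$ both lie in $\{4,12\}$ modulo $16$. Either $b\equiv 4c$, so the set is $\{a,a+b,a+8\}$, or $b\equiv 4c+8$, so $b+4c\equiv 0\pmod{16}$ and the set is $\{a,a+b,a+4c\}$. In both cases at most $3$ of the $4$ classes $\equiv 2\pmod 4$ modulo $16$ occur, which gives an obstruction modulo $16$ and $\epsilon_{Q,2}=4$.

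The main obstacle is this last modulo-$16$ computation. One must correctly track that $x$ odd forces $ax^2\equiv a\pmod{16}$, and that the $\equiv 2\pmod 4$ classes cannot use an odd $z$. If that bookkeeping is off, I would simply enumerate $(x\bmod 4,\,y\bmod 2,\,z\bmod 4)$ for the four combinations $(a,b)\bmod 16$ up to the sign of $c$.
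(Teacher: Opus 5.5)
Your proof is correct and takes essentially the same route as the paper. For $4\mid a,b$ you reduce to $cz^2$ modulo $4$; for $a\equiv 2\pmod 4$ you check modulo $4$ and then count the odd classes $c+\{0,a,b,a+b\}$ modulo $8$; for $b\equiv 4\pmod 8$ you check modulo $8$ and then count the classes $a+\{0,b,4c,b+4c\}\equiv 2\pmod 4$ modulo $16$, with the same split $b\equiv 4c$ versus $b+4c\equiv 0\pmod{16}$. Your bookkeeping in that last step (these classes force $x$ odd and $z$ even) is right, so the fallback enumeration you mention is unnecessary.
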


\begin{proof}[Proof of \cref{prop:ternary-two-even}]
\begin{enumerate}
\item[(1)] Suppose $4 \mid a$ and $4 \mid b$. Then $Q(x,y,z) \equiv cz^2 \pmod{4}$. Since $cz^2 \not\equiv 2 \pmod{4}$, this proves the claim.

\item[(2)] Suppose $a \equiv b \equiv 2 \pmod{4}$, or $a \equiv 2 \pmod{4}$ and $b \equiv 0 \pmod{8}$. Then
\[
Q(x,y,z) \equiv 2x^2 \pm z^2 \ (+\, by^2) \pmod{4}.
\]
A direct check shows that these forms represent every residue class modulo $4$.

On the other hand,
\[
ax^2 \equiv 0, a \pmod{8}, \qquad by^2 \equiv 0, b \pmod{8}, \qquad cz^2 \equiv 0, c, 4 \pmod{8}.
\]
Considering the representable odd residues modulo $8$, we obtain
\[
Q_{\mathrm{odd}(8)} = \{\, c,\ a+c,\ b+c,\ a+b+c \,\}.
\]
Hence, to complete the proof it suffices to show $\#Q_{\mathrm{odd}(8)} < 4$.

\begin{enumerate}
\item[(2-i)] Suppose $a \equiv b \equiv 2 \pmod{4}$. Then either $a+b \equiv 0 \pmod{8}$ or $a \equiv b \pmod{8}$, and in either case $\#Q_{\mathrm{odd}(8)} 
\leq 3$.

\item[(2-ii)] Suppose $a \equiv 2 \pmod{4}$ and $b \equiv 0 \pmod{8}$. Since $b \equiv 0 \pmod{8}$, we have $\#Q_{\mathrm{odd}(8)} \leq 2$.
\end{enumerate}
Thus $Q(x,y,z)$ has an obstruction modulo $8$.

\item[(3)] Suppose $a \equiv 2 \pmod{4}$ and $b \equiv 4 \pmod{8}$. Then
\[
ax^2 \equiv 0, a, 8 \pmod{16}, \qquad by^2 \equiv 0, b \pmod{16}, \qquad cz^2 \equiv 0, c, 4c, c+8 \pmod{16}.
\]
Modulo $8$, since $a \equiv 2 \pmod 4$ and $b \equiv 4 \pmod 8$, we have
$Q_{\mathrm{odd}(8)} = \{c, a+c, b+c, a+b+c\} = \{c,c+2,c+4,c+6\} \pmod 8$ and 
$Q_{\mathrm{even}(8)} = \{0,a, 4,a+4 \} = \{0,2,4,6\} \pmod 8$.
Thus, $Q(x,y,z)$ has no obstruction modulo $8$.
We therefore turn to the residues $\equiv 2 \pmod{4}$ modulo $16$, and obtain
\[
Q_{2(4) \pmod{16}} = \{\, a,\ a+b,\ a+4c,\ a+b+4c \,\}.
\]
Hence, to complete the proof it suffices to show $\#Q_{2(4)\pmod{16}} < 4$. Note that $4c \equiv 4, 12 \pmod{16}$ and $b \equiv 4, 12 \pmod{16}$.

\begin{enumerate}
\item[(3-i)] Suppose $b \equiv 4c \pmod{16}$. Then $a+b \equiv a+4c \pmod{16}$, so $\#Q_{2(4)\pmod{16}} \leq 3$.

\item[(3-ii)] Suppose $b \not\equiv 4c \pmod{16}$. Then $b+4c \equiv 0 \pmod{16}$, so $\#Q_{2(4)\pmod{16}} \leq 3$.
\end{enumerate}
Therefore $Q(x,y,z)$ has an obstruction modulo $16$.
\end{enumerate}
\end{proof}

\subsection{(iii) Exactly one of \texorpdfstring{$a,b,c$}{a, b, c} is even}
As for $\epsilon_{Q,p}$, we may assume that $a$ is even and $b,c$ are odd.
\begin{prop}\label{prop:ternary-one-even}
Let $Q(x,y,z)=ax^2+by^2+cz^2$ with $a$ even and $b,c$ odd. Then
\[
   \epsilon_{Q,2} =
   \begin{cases}
      2      & \text{if } 4 \mid a, \\
      \infty      & \text{if } a \equiv 2 \pmod 4 \text{ and } a+b+c \equiv 0 \pmod 8 \text{, or }\\
                  & a \equiv 2 \pmod 4 \text{ and } b+c \equiv 0 \pmod 8, \\
      4      &\text{(otherwise)}.
   \end{cases}
\]
\end{prop}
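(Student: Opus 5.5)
We split according to $a \bmod 4$ and, when $a\equiv 2\pmod 4$, according to the residues of $b,c$ modulo $8$. Throughout we use $\epsilon_{Q,2}\ge 2$ (primitivity) and the lifting \cref{lem:8-adic-lift}. Since $b$ and $c$ are odd, any representation with $y$ or $z$ odd has $by$ or $cz$ odd. Hence it suffices to find such representations of every class modulo $8$ to get $\epsilon_{Q,2}=\infty$. To get $\epsilon_{Q,2}=4$ it suffices to find them for every class modulo $8$ and then exhibit a missing class modulo $16$.

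\textbf{Case $4\mid a$.} Here $Q\equiv by^2+cz^2\pmod 4$, and odd squares are $1$ modulo $4$. If $b\equiv c\pmod 4$, the values modulo $4$ are $\{0,b,2\}$, missing $-b$. If $b\not\equiv c\pmod 4$, they are $\{0,1,3\}$, missing $2$. Either way $\epsilon_{Q,2}=2$.

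\textbf{Case $a\equiv 2\pmod 4$.} Modulo $4$ the form represents everything: $2x^2$ provides $0,2$, and $by^2$ together with $by^2+2x^2$ provides both odd classes. The three sets of squared values modulo $8$ are
\[
ax^2\in\{0,a\},\qquad by^2\in\{0,4b,b\},\qquad cz^2\in\{0,4c,c\},
\]
where $4b\equiv 4c\equiv 4\pmod 8$. We want every class modulo $8$ with $y$ or $z$ odd. The odd classes come from $b,\ b+4,\ b+a,\ b+a+4$ (taking $y$ odd, $z$ even), which cover all four odd residues. The even classes with $y,z$ both odd are $b+c,\ b+c+4,\ a+b+c,\ a+b+c+4$, and these cover all four even residues because $a\equiv 2\pmod 4$. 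So $Q$ has no obstruction modulo $8$, and \cref{lem:8-adic-lift} gives $\epsilon_{Q,2}\in\{4,\infty\}$.

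\textbf{Deciding between $4$ and $\infty$.} This is the main computation: we must decide whether some class modulo $16$ is missed. Write $\mathcal V$ for the value set modulo $16$, using
\[
ax^2\in\{0,a,4a,9a\},\qquad by^2\in\{0,b,4b,9b\},\qquad cz^2\in\{0,c,4c,9c\},
\]
with $4a\equiv 8$ and $9a\equiv a+8$. Odd values of $Q$ require exactly one of $y,z$ odd, and lift freely by adding $8$ or varying the even square. Similarly, values $\equiv 2\pmod 4$ come from $x$ odd with $y,z$ even, together with $y,z$ odd. So the only delicate classes are $0,4,8,12$ modulo $16$ and the odd classes modulo $16$.

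I would first locate the candidate missing classes. The natural candidates are the classes $\equiv 0\pmod 8$ or $\equiv 4\pmod 8$ not reachable with $y,z$ odd; with $y,z$ even those values are $4\cdot Q(x',y',z')$-type and cover little. With $y,z$ odd, $by^2+cz^2\in\{b+c,\ b+c+8\}$ and adding $ax^2$ gives $\{b+c,\ b+c+8,\ a+b+c,\ a+b+c+8\}$. These sums are multiples of $8$ exactly when $b+c\equiv 0\pmod 8$ or $a+b+c\equiv 0\pmod 8$. I expect the conditions in the statement to say precisely that both $0$ and $8$ modulo $16$ are hit nontrivially (with the class $4\pmod 8$ handled by one odd variable, since $b+4c$ etc.\ suffice). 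In that case every class modulo $16$ has a lift with $y$ or $z$ odd, and the induction of \cref{lem:8-adic-lift}, started at modulus $16$, gives $\epsilon_{Q,2}=\infty$.

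Otherwise, I would show that some class $8t\pmod{16}$, or one odd class, is missed. Concretely, I would compare the $4$-element candidate list $\{b+c,\ b+c+8,\ a+b+c,\ a+b+c+8\}$ against $\{0,8\}$, using the same counting trick as in \cref{prop:ternary-two-even}(3): a coincidence forces the set to have fewer than the needed number of elements. This subcase bookkeeping modulo $16$ is where I expect the real work, and I would verify it by a small explicit residue table in the reduced forms $a\in\{2,6,10,14\}$ and $b,c\in\{1,3,5,7\}\pmod 8$.
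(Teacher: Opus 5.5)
Your case $4\mid a$ is correct and is the paper's argument. The case $a\equiv 2\pmod 4$, however, has two genuine gaps.

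\textbf{The lifting step.} You claim that $y,z$ both odd yields the four classes $b+c,\ b+c+4,\ a+b+c,\ a+b+c+4$ modulo $8$. This is false. Odd squares are $\equiv 1\pmod 8$, so $by^2+cz^2\equiv b+c\pmod 8$ exactly, and with $ax^2\in\{0,a\}$ you get only the two classes $b+c$ and $a+b+c$. Now $ax$ is always even, and any value with exactly one of $y,z$ odd is odd. So the other two even classes modulo $8$ have no representation to which \cref{lem:8-adic-lift} applies: these are $4$ and $a+4$ if $b+c\equiv 0\pmod 8$, resp.\ $4$ and $a$ if $a+b+c\equiv0\pmod 8$.

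These classes are still represented modulo $8$, e.g.\ by $(0,2,0)$ and $(1,2,0)$, so ``no obstruction modulo $8$'' is true. But nothing in your argument then yields $\epsilon_{Q,2}\in\{4,\infty\}$. Your remark that ``$b+4c$ etc.\ suffice'' for the class $4\pmod 8$ also fails, since $b+4c$ is odd.

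The claim is in fact self-defeating. If it held, \cref{lem:8-adic-lift} would give $\epsilon_{Q,2}=\infty$ for every $a\equiv 2\pmod 4$, contradicting the third case; for instance $2x^2+y^2+z^2\not\equiv 14\pmod{16}$.

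What is missing is a second lifting statement for representations with $x$ odd, or $y\equiv2$, or $z\equiv 2\pmod 4$. There, perturbing by $2^{k-3}$ changes $Q$ by $2^{k-2}(axu+byv+czw)$ modulo $2^k$, and the bracket can be made $\equiv0$ or $2\pmod 4$. The quadratic error $2^{2k-6}(\cdots)$ forces the induction to start at modulus $32$. This is \cref{lem:32-adic-lift}. The paper feeds it explicit mod-$32$ representatives, e.g.\ $(0,2,0),(2,2,0),(0,2,4),(2,2,4)$, which hit all four classes $\equiv 4\pmod 8$ modulo $32$.

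\textbf{The missing class modulo $16$.} In the third case you look for a missed class among $0,8\pmod{16}$ or the odd classes, but these are always represented:
\begin{itemize}
\item $ax^2\in\{0,a,8\}\pmod{16}$. Note $9a\equiv a$, not $a+8$, since $8a\equiv 0$.
\item More generally, $Q(2x',2y',2z')=4Q(x',y',z')$ hits all of $0,4,8,12$, because $Q$ has no obstruction modulo $4$.
\item Taking $y$ odd and $z$ even gives $b+\{0,4,8,12\}+\{0,a\}$, i.e.\ every odd class.
\end{itemize}

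The obstruction lives in the classes $\equiv 2\pmod 4$. These arise in two ways:
\begin{itemize}
\item from $x$ odd with $y,z$ even, giving $\{a,\ a+4b,\ a+4c,\ a+4b+4c\}$;
\item from $y,z$ both odd, giving $\{b+c,\ b+c+8\}$ or $\{a+b+c,\ a+b+c+8\}$.
\end{itemize}
Splitting on whether $b\equiv c\pmod 4$, one shows this set has at most three elements when neither $b+c$ nor $a+b+c$ is $\equiv 0\pmod 8$; this is the paper's argument. Your planned residue table would uncover this. As written, though, the proposal points at the wrong residues and contains no argument for this step.
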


\begin{lem}\label{lem:32-adic-lift}
Let $Q(x,y,z) = ax^2 + by^2 + cz^2$, where $a \equiv 2 \pmod4$ and $b,c$ are odd.
Suppose that $Q(x_5,y_5,z_5) \equiv n_5 \pmod{32}$ and that $(x_5,y_5,z_5)$ satisfies at least one of the following conditions: $x_5$ is odd, $y_5 \equiv 2 \pmod{4}$, or $z_5 \equiv 2 \pmod{4}$.
Then, for every $k \geq 6$, every integer $n$ with $n \equiv n_5 \pmod {32}$ is represented by $Q$ modulo $2^k$.
\end{lem}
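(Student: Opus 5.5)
We argue by induction on $k$, as in the proofs of \cref{lem:p-adic-lift} and \cref{lem:8-adic-lift}, but with a shift of size $2^{k-3}$ rather than $2^{k-2}$, since the partial derivatives now carry an extra factor of $2$. Fix an integer $n \equiv n_5 \pmod{32}$. The induction hypothesis at level $k-1 \geq 5$ is:
\begin{quote}
$(x_{k-1},y_{k-1},z_{k-1})$ satisfies $Q(x_{k-1},y_{k-1},z_{k-1}) \equiv n \pmod{2^{k-1}}$, and at least one of the conditions ``$x_{k-1}$ odd'', ``$y_{k-1}\equiv 2 \pmod 4$'', ``$z_{k-1}\equiv 2\pmod 4$'' holds.
\end{quote}
The base case $k-1=5$ is the given triple $(x_5,y_5,z_5)$.

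For the inductive step, we perturb only one coordinate: one for which the relevant condition holds. Write $a = 2a'$ with $a'$ odd.

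\emph{Case $x_{k-1}$ odd.} Put $x_k = x_{k-1} + 2^{k-3}u$ and keep $y,z$ unchanged. Expanding,
\[
a x_k^2 = a x_{k-1}^2 + a'x_{k-1}\,2^{k-1}u + a' 2^{2k-5}u^2 .
\]
Since $k \geq 6$ we have $2k-5 \geq k$, so
\[
Q(x_k,y_{k-1},z_{k-1}) \equiv Q(x_{k-1},y_{k-1},z_{k-1}) + 2^{k-1}a'x_{k-1}u \pmod{2^k}.
\]
Because $a'x_{k-1}$ is odd, choosing $u \in \{0,1\}$ fixes the bit at $2^{k-1}$ as desired.

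\emph{Case $y_{k-1} = 2t$ with $t$ odd.} Put $y_k = y_{k-1} + 2^{k-3}v$. Then
\[
b y_k^2 = b y_{k-1}^2 + 2^{k-1} b t v + b\,2^{2k-6}v^2 ,
\]
and $2k-6 \geq k$ exactly when $k \geq 6$. So the change modulo $2^k$ is $2^{k-1}btv$ with $bt$ odd, and again $v\in\{0,1\}$ adjusts the top bit.

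\emph{Case $z_{k-1}\equiv 2 \pmod 4$.} This is identical to the previous case, with $c$ in place of $b$.

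The resulting triple satisfies $Q \equiv n \pmod{2^k}$. Since $k \geq 6$, the shift $2^{k-3}$ is divisible by $8$, so each coordinate is unchanged modulo $4$. Hence the chosen condition (oddness of $x$, or $\equiv 2 \pmod 4$ for $y$ or $z$) persists, which completes the induction.

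The only delicate point is the bookkeeping of $2$-adic valuations. The shift must be exactly $2^{k-3}$ so that the linear term lands at $2^{k-1}$ with an odd coefficient. This is where $a \equiv 2 \pmod 4$ and the $\equiv 2 \pmod 4$ conditions on $y$ and $z$ are used. At the same time, the quadratic term must vanish modulo $2^k$, and in the $y,z$ cases this is what forces the starting level to be $32$, i.e. $k \geq 6$.
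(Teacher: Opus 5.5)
Your proof is correct and follows the paper's argument: induction on $k$ with shift $2^{k-3}$, the linear term landing at $2^{k-1}$ with an odd coefficient, the quadratic term $2^{2k-6}$ vanishing because $k\ge 6$, and the coordinate condition persisting because the shift is divisible by $8$. The only difference is cosmetic. The paper perturbs all three coordinates at once and chooses $u,v,w$ so that $ax_{k-1}u+by_{k-1}v+cz_{k-1}w\equiv 0$ or $2 \pmod 4$, whereas you perturb only the coordinate satisfying the hypothesis, which makes the $2$-adic bookkeeping slightly more explicit.
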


\begin{proof}[Proof of \cref{lem:32-adic-lift}]
Fix an integer $n\equiv n_5\pmod{32}$. We proceed by induction on $k$.

Suppose $Q(x_{k-1},y_{k-1},z_{k-1}) \equiv n \pmod{2^{k-1}}$ and that $(x_{k-1},y_{k-1},z_{k-1})$
satisfies the assumption above.

Define
\[
x_k = x_{k-1} + 2^{k-3}u, \qquad y_k = y_{k-1} + 2^{k-3}v, \qquad z_k = z_{k-1} + 2^{k-3}w.
\]
Then
\[
Q(x_k,y_k,z_k) \equiv Q(x_{k-1},y_{k-1},z_{k-1}) + 2^{k-2}\bigl(ax_{k-1}u + by_{k-1}v + cz_{k-1}w\bigr) + 2^{2k-6}(\dots) \pmod{2^k}.
\]
Since $2k-6 \geq k$, this simplifies to
\[
Q(x_k,y_k,z_k) \equiv Q(x_{k-1},y_{k-1},z_{k-1}) + 2^{k-2}\bigl(ax_{k-1}u + by_{k-1}v + cz_{k-1}w\bigr) \pmod{2^k}.
\]
By the hypothesis on $(x_{k-1},y_{k-1},z_{k-1})$, we can choose $u,v,w$ so that
\[
ax_{k-1}u + by_{k-1}v + cz_{k-1}w \equiv 0 \text{ or } 2 \pmod{4},
\]
and hence choose the next binary digit so that $Q(x_k,y_k,z_k)\equiv n\pmod{2^k}$. The increments are divisible by $8$, so the hypothesis on the coordinates persists. This completes the induction.
\end{proof}

\begin{proof}[Proof of \cref{prop:ternary-one-even}]
\begin{enumerate}
\item[(1)] Suppose $4 \mid a$. Then $Q(x,y,z) \equiv \pm y^2 \pm z^2 \pmod{4}$.
A direct check shows that none of the four possible forms $\pm y^2\pm z^2$ represents every residue class modulo $4$.

\item[(2)] Suppose $a\equiv2\pmod4$ and either $a+b+c \equiv 0 \pmod{8}$ or $b+c \equiv 0 \pmod{8}$. We prove this case by combining \cref{lem:8-adic-lift,lem:32-adic-lift}.

For $(x,y,z) = (0,1,0),\,(1,1,0),\,(0,1,2),\,(1,1,2) \pmod{8}$, the corresponding values of $Q(x,y,z)$ modulo $8$ are, respectively,
\[
b,\quad b+a,\quad b+4,\quad b+a+4.
\]
Since $a \equiv 2 \pmod{4}$, we have $\{b,\ b+a,\ b+4,\ b+a+4\} = \{1,3,5,7\} \pmod{8}$.
Thus, by \cref{lem:8-adic-lift}, $Q(x,y,z)$ represents every odd residue class modulo $2^k$ for every $k$.
We now consider representations of even numbers.

\begin{enumerate}
\item[(2-i)] Suppose $b+c \equiv 0 \pmod{8}$. Consider the set $\{b+c,\ a+b+c,\ 4b,\ a+4b\} \pmod{8}$.
Since $b+c \equiv 0 \pmod{8}$, $a \equiv 2 \pmod{4}$, and $b$ is odd, we have
\[
\{b+c,\ a+b+c,\ 4b,\ a+4b\} = \{0,2,4,6\} \pmod{8}.
\]
Moreover, $(0,1,1)$ and $(1,1,1)$ represent $b+c$ and $a+b+c$, respectively, modulo $8$.
Hence \cref{lem:8-adic-lift} applies, and $Q(x,y,z)$ represents every residue class congruent to $0$ or $a$ modulo $8$, modulo $2^k$ for every $k$.

To lift the solutions with $Q(x,y,z) \equiv 4b, a+4b \pmod{8}$, we gather more information modulo $32$.
Since $a \equiv 2 \pmod{4}$ and $b$ is odd, we have $4a \equiv 8, 24 \pmod{32}$ and $4b \equiv 4, 12, 20, 28 \pmod{32}$.

Thus, for $(x,y,z) = (0,2,0),\,(2,2,0),\,(0,2,4),\,(2,2,4) \pmod{32}$, the corresponding values of $Q(x,y,z)$ modulo $32$ are
\begin{equation}\label{eq:four-mod-eight-lifts}
4b,\quad 4a+4b,\quad 4b+16,\quad 4a+4b+16, \quad \text{i.e.,} \quad \{4,12,20,28\} \pmod{32}.
\end{equation}
Since these solutions satisfy the assumption of \cref{lem:32-adic-lift}, $Q(x,y,z)$ represents every residue class congruent to $4$ modulo $8$, modulo $2^k$ for every $k$.

On the other hand, $16b \equiv 16c \equiv 16 \pmod{32}$, and $4(b+c) \equiv 0 \pmod{32}$ implies $4b \equiv -4c \pmod{32}$.
Thus, for $(x,y,z) = (1,2,0),\,(1,2,4),\,(1,0,2),\,(1,4,2) \pmod{32}$, the corresponding values of $Q(x,y,z)$ modulo $32$ are
\[
a+4b,\quad a+4b+16,\quad a-4b,\quad a-4b+16.
\]
Since $a-4b \equiv a+4b-8b \pmod{32}$ and $-8b \equiv 8, 24 \pmod{32}$, we have
\[
\{a+4b,\ a+4b+16,\ a-4b,\ a-4b+16\} = \{a+4b,a+4b+8,a+4b+16,a+4b+24\} \pmod{32}.
\]
Since these solutions satisfy the assumption of \cref{lem:32-adic-lift}, $Q(x,y,z)$ represents every residue class congruent to $a+4$ modulo $8$, modulo $2^k$ for every $k$.
Hence $Q(x,y,z)$ has no obstruction modulo $2^k$ for any $k$.

\item[(2-ii)] Suppose $a+b+c \equiv 0 \pmod{8}$. The solutions $(1,1,1)$ and $(0,1,1)$ represent $a+b+c\equiv0$ and $b+c\equiv-a\pmod8$, respectively, and satisfy the hypothesis of \cref{lem:8-adic-lift}. Moreover, the four solutions used in \cref{eq:four-mod-eight-lifts} represent all four residue classes modulo $32$ that are congruent to $4$ modulo $8$. Thus, $Q(x,y,z)$ represents every residue class congruent to $0$, $-a$, or $4$ modulo $8$, modulo $2^k$ for every $k$.

Moreover, for $(x,y,z) = (1,0,0),\,(1,4,0),\,(1,2,2),\,(3,2,2) \pmod{32}$, the corresponding values of $Q(x,y,z)$ modulo $32$ are
\[
a,\quad a+16,\quad a+4b+4c,\quad 9a+4b+4c.
\]
Since $a+b+c \equiv 0 \pmod{8}$, we have $4(a+b+c) \equiv 0 \pmod{32}$, while $4a \equiv 8$ or $24 \pmod{32}$. Hence
\[
9a+4b+4c \equiv a+4a+4(a+b+c) \equiv a+8 \text{ or } a+24 \pmod{32},
\]
and correspondingly
\[
a+4b+4c \equiv a - 4a + 4(a+b+c) \equiv a+24 \text{ or } a+8 \pmod{32}.
\]
Since these solutions satisfy the assumption of \cref{lem:32-adic-lift}, they lift to all four residue classes modulo $2^k$ that are congruent to $a$ modulo $8$.
Therefore $Q(x,y,z)$ has no obstruction modulo $2^k$ for any $k$.
\end{enumerate}

\item[(3)] Finally, suppose $a\equiv2\pmod4$, $a+b+c\not\equiv0\pmod8$, and $b+c\not\equiv0\pmod8$. We have
\[
ax^2 \equiv 0, a, 8 \pmod{16}, \qquad by^2 \equiv 0, b, 4b, b+8 \pmod{16}, \qquad cz^2 \equiv 0, c, 4c, c+8 \pmod{16}.
\]
The residues
\[
\{b,b+a,b+4,b+a+4\}=\{1,3,5,7\}\pmod8
\]
are represented, as are
\[
\{0,4,a,a+4\}=\{0,2,4,6\}\pmod8.
\]
Thus, $Q(x,y,z)$ has no obstruction modulo $8$.
We therefore consider representations $\equiv 2 \pmod{4}$ modulo $16$, and show that $\#Q_{2(4)\pmod{16}} < 4$.

\begin{enumerate}
\item[(3-i)] Suppose $b \not\equiv c \pmod{4}$. Since $b+c \not\equiv 2 \pmod{4}$, we have
\[
Q_{2(4)\pmod{16}} = \{a,\ a+4b,\ a+4c,\ a+4b+4c\}.
\]
But $4b+4c \equiv 0 \pmod{16}$, so $\#Q_{2(4)\pmod{16}} \leq 3$.

\item[(3-ii)] Suppose $b \equiv c \pmod{4}$. Since $b+c \equiv 2 \pmod{4}$, we have
\[
Q_{2(4)\pmod{16}} = \{a,\ a+4b,\ a+8,\ b+c,\ b+c+8\}.
\]
If $4b \equiv 4 \pmod{16}$, then $Q_{2(4)\pmod{16}} = \{a,\ a+4,\ a+8,\ b+c,\ b+c+8\}$. We claim that $a+12 \not\equiv b+c,\ b+c+8 \pmod{16}$.
Indeed, either congruence is equivalent to $2a+12 \equiv a+b+c$ or $a+b+c+8 \pmod{16}$; since $2a+12 \equiv 0, 8 \pmod{16}$, this would force $a+b+c \equiv 0 \pmod{8}$, a contradiction.
If $4b \equiv 12 \pmod{16}$, then
$Q_{2(4)\pmod{16}} = \{a,\ a+12,\ a+8,\ b+c,\ b+c+8\}$. In this case the same argument, with $2a+4\equiv0$ or $8\pmod{16}$, shows that $a+4$ is not represented.
Hence at least one of the four residue classes congruent to $2$ modulo $4$ is not represented, so $\#Q_{2(4)\pmod{16}} \leq 3$.
\end{enumerate}
\end{enumerate}
\end{proof}

\subsection{Examples}
Using \cref{main-thm2}, we obtain the following examples.
\begin{ex}
\begin{enumerate}
\item[] \vspace{\baselineskip}
  \item[(1a)] $a\equiv b\equiv c\pmod 4$: \quad
    $Q=5x^2+65y^2+9z^2$, \; $p_{\min}=5$
    $\;\Rightarrow\; \kappa_Q=5$.

  \item[(1b)] otherwise: \quad
    $Q=x^2+3y^2+5z^2$, \; $p_{\min}=\infty$, \; $q_{\min}=5$
    $\;\Rightarrow\; \kappa_Q=25$.
\end{enumerate}

\begin{enumerate}
  \item[(2a)] $4\mid a,b$: \quad
    $Q=12x^2+36y^2+z^2$, \; $p_{\min}=3$
    $\;\Rightarrow\; \kappa_Q=3$.

  \item[(2b)] $a\equiv b\equiv 2\pmod 4$: \quad
    $Q=10x^2+30y^2+z^2$, \; $p_{\min}=5$
    $\;\Rightarrow\; \kappa_Q=5$.

  \item[(2c)] $a\equiv 2\pmod4,\ b\equiv 0\pmod8$: \quad
    $Q=14x^2+56y^2+z^2$, \; $p_{\min}=7$
    $\;\Rightarrow\; \kappa_Q=7$.

  \item[(2d)] $a\equiv 2\pmod4,\ b\equiv 4\pmod8$: \quad
    $Q=2x^2+20y^2+3z^2$, \; $p_{\min}=\infty$, \; $q_{\min}=3$
    $\;\Rightarrow\; \kappa_Q=9$.
\end{enumerate}

\begin{enumerate}
  \item[(3a)] $4\mid a$: \quad
    $Q=28x^2+21y^2+5z^2$, \; $p_{\min}=7$
    $\;\Rightarrow\; \kappa_Q=4$.

  \item[(3b)] $a\equiv 2\pmod4$ and $b+c\equiv0\pmod8$: \quad
    $Q=2x^2+y^2+7z^2$, \; $p_{\min}=\infty$, \; $q_{\min}=7$
    $\;\Rightarrow\; \kappa_Q=49$.

  \item[(3c)] otherwise: \quad
    $Q=2x^2+5y^2+45z^2$, \; $p_{\min}=5$, \; $q_{\min}=3$
    $\;\Rightarrow\; \kappa_Q=5$.
\end{enumerate}
\end{ex}

\section*{Acknowledgements}
The author would like to thank his supervisor, Professor Henrik Bachmann, for his helpful suggestions and continued support.

\bibliographystyle{alphaurl}
\bibliography{reference-LO}

\end{document}